\documentclass[11pt]{article}

\usepackage[T1]{fontenc}
\usepackage{amsfonts}
\usepackage{amsmath}
\usepackage{amssymb}
\usepackage{amsthm}
\usepackage{bbm}
\usepackage{bm}
\usepackage{mathrsfs}
\usepackage{verbatim}
\usepackage{setspace}
\usepackage{color}
\usepackage{pdfsync}
\usepackage{enumitem}
\usepackage{graphicx}
\usepackage{stmaryrd}
\usepackage{float} %
\usepackage{tikz}
\usetikzlibrary{automata, positioning}
\usepackage{tcolorbox}

\theoremstyle{plain}
\newtheorem{theorem}{Theorem}[section]

\newtheorem{lemma}[theorem]{Lemma}
\newtheorem{corollary}[theorem]{Corollary}

\theoremstyle{definition}
\newtheorem{definition}[theorem]{Definition}
\newtheorem{remark}[theorem]{Remark}

\newtheorem{example}[theorem]{Example}

\theoremstyle{remark}

\renewenvironment{thebibliography}[1]{%
\begin{oldthebibliography}{#1}%
\setlength{\baselineskip}{.9em}
\linespread{1}
\small
\setlength{\parskip}{0.3ex}%
\setlength{\itemsep}{.2em}%
}%
{%
\end{oldthebibliography}%
}

\newcommand{\N}{\mathbb{N}}
\renewcommand{\P}{P}

\newcommand{\R}{\mathbb{R}}

\newcommand{\X}{\mathbb{X}}
\newcommand{\T}{\mathbb{T}}

\newcommand{\cA}{\mathcal{A}}

\newcommand{\cF}{\mathcal{F}}

\newcommand{\cL}{\mathcal{L}}

\DeclareMathOperator*{\esssup}{ess\, sup}

\newcommand{\as}{\mbox{a.s.}}

\numberwithin{equation}{section}

\usepackage[pdfborder={0 0 0}]{hyperref}
\hypersetup{
  urlcolor = black,
  pdfauthor = {Marcel Nutz, Yuchong Zhang},
  pdfkeywords = {Conditional Optimal Stopping; Time-inconsistency; Equilibrium},
  pdftitle = {Conditional Optimal Stopping: A Time-Inconsistent Optimization},
  pdfsubject = {Conditional Optimal Stopping: A Time-Inconsistent Optimization},
  pdfpagemode = UseNone
}

\begin{document}

\title{\vspace{-2.5em}
  Conditional Optimal Stopping:\\ A Time-Inconsistent Optimization
\date{\today}
\author{
  Marcel Nutz%
  \thanks{
  Depts.\ of Statistics and Mathematics, Columbia University, mnutz@columbia.edu. Research supported by an Alfred P.\ Sloan Fellowship and NSF Grant DMS-1812661. MN is grateful to Pierre-Louis Lions and Abdoulaye Ndiaye for helpful discussions.
  }
  \and
  Yuchong Zhang%
  \thanks{Dept.\ of Statistical Sciences, University of Toronto, yuchong.zhang@utoronto.ca. %
  }
 }
}
\maketitle \vspace{-.8em}

\begin{abstract}
Inspired by recent work of P.-L.~Lions on conditional optimal control, we introduce a problem of optimal stopping under bounded rationality: the objective is the expected payoff at the time of stopping, conditioned on another event. For instance, an agent may care only about states where she is still alive at the time of stopping, or a company may condition on not being bankrupt. We observe that conditional optimization is time-inconsistent due to the dynamic change of the conditioning probability and develop an equilibrium approach in the spirit of R.~H.~Strotz' work for sophisticated agents in discrete time. Equilibria are found to be essentially unique in the case of a finite time horizon whereas an infinite horizon gives rise to non-uniqueness and other interesting phenomena. We also introduce a theory which generalizes the classical Snell envelope approach for optimal stopping by considering a pair of processes with Snell-type properties.
\end{abstract}

\vspace{0.9em}

{\small
\noindent \emph{Keywords} Conditional Optimal Stopping; Time-inconsistency; Equilibrium

\noindent \emph{AMS 2010 Subject Classification}
60G40; %
93E20; %
91A13; %
91A15 %

}

\section{Introduction}\label{se:intro}

The classical optimal stopping problem is to maximize the expected payoff $E[G_{\tau}]$ over all stopping times $\tau$, where $G=(G_{t})$ is a given adapted process. In this paper, we propose to study a criterion that conditions on a given stopping time $\sigma$ not being reached at the time~$\tau$: 
\begin{equation}\label{eq:introPrecomm}
  \sup_{\tau}\frac{E[G_\tau 1_{\{\tau\lhd \sigma\}}]}{P(\tau\lhd \sigma)} 
\quad\mbox{where} \quad 
  \tau \lhd \sigma \:\Leftrightarrow\:  \tau < \sigma \mbox{ or } \sigma=\infty.
\end{equation} 
When the model is based on a Markov chain $X$, a natural choice of $\sigma$ is the first exit time from a given set $B$. If, for instance, the stopping decision is made by a company, one application is that $X$ being in~$B$ indicates solvency so that $\sigma$ is the time of bankruptcy. Indeed, the company may only care about states where the stopping payoff happens before~$\sigma$ as the company no longer exists in the other states. Or, for an individual making a financial decision, $\sigma$ may be the time of death, then the model expresses that she only cares about states where the payoff happens while she is alive.

It is typically not possible to model such a conditional problem as a classical optimal stopping problem, except in the trivial case where the conditioning event does not depend on the stopping time~$\tau$. The classical framework would require us to model this as an exit time problem where a specific payoff is assigned to the exit event (that is, a value $G_{t}$ for $t\geq\sigma$). %
E.g., for the individual facing possible death, we are unable to simply say, ``I don't care what happens after I die.'' Instead, we have to assign a specific payoff at death. Even if the modeler were willing to fix some value in order to be ``pragmatic,'' it may be hard to make a justifiable choice and the solution of the optimization will typically depend on it.

This paper is inspired by recent work of P.-L. Lions which introduces the optimal control of conditioned processes~\cite{Lions.17}. There, the main example is controlling the drift of a Brownian motion and the payoff is conditioned on the process staying inside a given domain. The problem is cast as an optimal control problem of Fokker--Planck equations, a particular type of mean field game problem with coupling through the final condition. The limit towards the classical case, where the domain tends to $\R^{d}$, is given particular attention. While it is observed that optimal controls depend on the starting point, the question of time-consistency is not raised.

In the present paper, we introduce optimal stopping with conditioning, a novel problem to the best of our knowledge. One of our first observations is that the problem is time-inconsistent in the sense of Strotz~\cite{Strotz.55}: if an agent determines an optimal strategy at time $t=0$ and reconsiders her decision at a later time taking into account her present state, she may contradict her previous decision and find that her strategy is no longer optimal. In this setting where the dynamic programming principle does not hold, there is more than one notion of optimization. The \emph{precommitted} problem is to optimize the expected payoff at $t=0$, assuming that the decision will not be challenged later on; i.e., the agent ``commits'' to the initial choice. (The theory of~\cite{Lions.17} corresponds to this notion.) In Strotz' terminology, a \emph{sophisticated} agent without a commitment device is aware of the fact that her ``future selves'' may overturn her current plan. Thus, she takes this as a constraint for a ``strategy of consistent planning'': she chooses her behavior ignoring plans that she knows her future selves will not carry out; that is, she  selects an action such that her future incarnations have no incentive to deviate. The resulting time-consistent strategy is called subgame perfect Nash equilibrium, and this is the notion that we will focus on. A different interpretation follows the literature on intergenerational models or overlapping-generations models (see \cite{Samuelson.58} and the work thereafter) where future decisions are taken by subsequent generations rather than other selves. For instance, a government agency may want to take into account future presidential terms and opt for policies which will not be reversed after the next election. 

Beyond being interesting in and of itself, conditional stopping may also help to shed more light on the conditioned control of processes, since optimal stopping is often more tractable than control.
\subsection{Literature}\label{se:lit}

Following the early work of \cite{Strotz.55}, a rich literature involving time-inconsistency has emerged in economics. For instance, \cite{Pollak.68} reconsiders Strotz' concept in a setting with non-exponential discounting when the number of decision points changes, and \cite{PelegYaari.73} studies preferences that change over time. Non-standard discounting (in particular hyperbolic) and time preferences (such as habit formation) are the most frequent reasons for time-inconsistency in this literature; see \cite{FrederickLoewensteinODonoghue.02} for an overview.
The models are mostly formulated in discrete time with finite or infinite time horizon. Time-inconsistency also arises when the optimization objective involves a nonlinear function of an expectation, such as the mean-variance criterion in~\cite{BasakChabakauri.10}, or a probability distortion as in~\cite{Barberis.12,HeZhou.11,JinZhou.08}. (A probability distortion corresponds to an optimization objective that over- or underemphasizes events relative to their objective probability.)

The pioneering work of~\cite{EkelandLazrak.06,EkelandLazrak.10} has initiated the study on how to define and obtain equilibrium strategies for the optimal control of continuous-time processes, using the example of Ramsey's problem when the planner uses non-exponential discounting. In the continuous setting,  varying a control at a single instance in time is meaningless since it does not affect the diffusion. The authors develop a first-order criterion which corresponds to variations of the control over a short time interval, meaning that agents can commit for a short period. This has led to a number of works, including portfolio optimization with non-exponential discounting~\cite{EkelandMbodjiPirvu.12, EkelandPirvu.08}, mean-variance portfolio selection~\cite{BjorkMurgociZhou.14, Czichowsky.13} and general linear--quadratic control~\cite{HuJinZhou.12,HuJinZhou.17}. Nevertheless, this concept of equilibrium is not the only one possible; in particular, first-order conditions are not sufficient for optimality in general. The recent study~\cite{HuangZhou.18} introduces a stronger concept of optimality and highlights the differences.
In~\cite{BjorkKhapkoMurgoci.17,BjorkMurgoci.14} the authors study time-inconsistent control in discrete and continuous time, respectively, and the relation between them, for a general class of objectives that are a sum of an expected utility and a nonlinear function of an expected utility with possible dependence on the initial condition. See also~\cite{Yong.12} for a continuous-time framework with dependence on the initial condition.

The closest reference for the present work is~\cite{HuangZhou.17} where the authors study optimal stopping in discrete time under non-exponential discounting in a Markovian context. In the finite horizon case, a backward recursion yields the unique equilibrium. In the infinite horizon case, the authors focus on a time-homogeneous Markov chain. Under the assumption of decreasing impatience (including hyperbolic discounting), a time-homogeneous equilibrium is constructed by iterating the ``strategic reasoning'' or ``fictitious play'' map (cf.~$\Phi$ in Section~\ref{se:equilib}); that is, every agent optimizes her decision between continuing and stopping while taking as given the decisions of all other agents. Remarkably, an equilibrium which is optimal for all agents can be obtained. We remark that~\cite{HuangZhou.17} is predated by~\cite{HuangNguyenHuu.18} where the iterative approach was first implemented in continuous time. In~\cite{HuangNguyenHuu.18}, time-homogeneous equilibria are obtained for time-homogeneous diffusions and inhomogeneous equilibria for time-inhomogeneous diffusions. See also \cite{HuangZhou.17b} for a discussion of optimal equilibria in continuous time and \cite{TanWeiZhou.18} for a recent study of optimal stopping with non-exponential discounting where equilibria may not exist and this fact is related to a failure of smooth pasting. Optimal stopping under probability distortion is studied in \cite{HuangNguyenHuuZhou.17} with a particular focus on equilibria that are obtained by iterating from na{\"i}ve strategies.

The mentioned works on optimal stopping in continuous time use a direct analogy to the discrete-time case to define equilibria: each agent may stop or continue, without any commitment device. Indeed, for optimal stopping, the first-order approach of \cite{EkelandLazrak.06} is not a necessity: the decision to stop at a single instance in time immediately affects the process. On the other hand, as highlighted by~\cite{EbertStrack.18} in the context of prospect theory, the definition in continuous time may include unreasonable equilibria based on the fact that continuation and stopping for a time-$t$ agent produce the same payoff if the subsequent agents stop and $G$ is continuous. In particular, ``always stopping'' is an equilibrium even if, say, $G$ is increasing. 
In a homogeneous diffusion model, \cite{ChristensenLindensjo.18a,ChristensenLindensjo.18} use a first-order condition to define equilibria for two problems with time-inconsistency, and then ``always stopping'' is not necessarily an equilibrium. The relation between the two definitions has not been clarified so far.

To the best of our knowledge, the present paper is the first investigation of conditional optimal stopping. Regarding the control of conditioned processes, we would like to mention ongoing work of R.~Carmona and M.~Lauri\`ere where the problem of~\cite{Lions.17} is studied as a mean field control problem for open and closed loop controls as well as ongoing work of Y.~Achdou and M.~Lauri\`ere on the numerical resolution.

\subsection{Synopsis} 

We study the conditional optimal stopping problem in~\eqref{eq:introPrecomm} in a discrete-time setting with finite or infinite time horizon. While a continuous-time setting may certainly be of interest, our choice avoids some of the difficulties mentioned in the preceding section and leads to an uncontroversial definition of an equilibrium: at every time and state $(t,\omega)$, an agent makes a binary choice---stopping or continuing---without committing future agents.
We analyze such equilibria in a general stochastic framework while paying particular attention to the Markovian setting. 

In the case of a finite time horizon~$T$, there is a natural terminal condition (stopping is mandatory at~$T$) and we shall see that there is an equilibrium which can be constructed by a backward recursion. This recursion computes two processes, a value process like in the classical case and an additional ``survival process'' that keeps track of the conditioning probability induced by the future selves' decisions. The equilibrium is essentially unique, and if the stochastic framework is Markovian, then so is the equilibrium. These findings are in line with the results for other time-inconsistent problem as described in Section~\ref{se:lit}.

In the case of an infinite horizon, we provide a fairly general existence result by passing to the limit of finite horizon problems. (Note that for non-exponential discounting, existence may fail if the discounting does not satisfy decreasing impatience;  cf.~\cite[Example 3.1]{HuangZhou.17}.) On the other hand, we also provide examples showing that this case is more subtle than the previous one. We shall see that there can exist non-Markovian equilibria in addition to Markovian ones in a Markovian setting, which disproves a conjecture of~\cite{BjorkMurgoci.14} for our problem. Moreover, equilibria need not be unique even within the class of Markovian equilibria. Even more surprisingly, we detail a time-homogeneous Markovian example which does not admit a time-homogeneous equilibrium while time-inhomogeneous equilibria do exist. This is in sharp contrast to the results of~\cite{HuangNguyenHuu.18, HuangZhou.17} and illustrates that for our problem, in general, iterating the ``strategic reasoning'' map of~\cite{HuangZhou.17} does not converge. At a technical level, one reason is that non-exponential discounting with decreasing impatience as in~\cite{HuangZhou.17} preserves one inequality of the dynamic programming principle whereas in our problem, the rescaling due to the conditioning probability can cause deviations in both directions. 

It seems natural to ask for analogues of the classical Snell envelope theory in our setting. Indeed, the two processes described in the recursion for the finite time horizon can be characterized in more abstract terms by supermartingale properties. This leads to a notion that we call Snell pair and extends to the infinite-horizon setting. Snell pairs are (essentially) in one-to-one relation with equilibria. Similarly as in the classical case, the equilibrium policy is retrieved from the Snell pair by stopping where the value process meets the obstacle $G$, but the survival process is needed to adjust the classical supermartingale properties in the context of conditioning. The survival process, in turn, also enjoys a supermartingale property. We are not aware of similar notions in the prior literature.

The remainder of this paper is organized as follows. In Section~\ref{se:setting} we detail the observation of time-inconsistency and the equilibrium concept. Section~\ref{se:finiteHorizon} presents the results on the finite-horizon case. Existence of equilibria in the infinite-horizon case is covered in Section~\ref{se:infiniteHorizonExistence} and the corresponding examples are described in Section~\ref{se:infiniteHorizonExamples}. The concluding Section~\ref{se:Snell} discusses Snell pairs and their relation to equilibria.

\section{Setting}\label{se:setting}

Let $T\in\N\cup\{\infty\}$ be the time horizon. If $T<\infty$, set $\T=\{0,1,2,\ldots, T\}$; if $T=\infty$, set $\T=\N$. We will work on a probability space $(\Omega, \cF, P)$ equipped with a filtration $(\cF_{t})_{t\leq T}$ such that $\cF_{0}$ is trivial. Let $\sigma$ be a stopping time with $P(\sigma>0)=1$; we think of events that happen after $\sigma$ as irrelevant and call $D_{t}:=\{t<\sigma\}$ the \emph{domain of relevance} at time~$t\in\T$. In the case $T=\infty$, it is convenient to set $D_{\infty}:=\cap_{t\in\T} D_{t}=\{\sigma=\infty\}$.  We may note that $\sigma(\omega)=\inf\{t\in\T: \omega \notin D_t\}$; indeed, specifying $\sigma$ is equivalent to specifying a decreasing adapted sequence $(D_t)_{t\in\T}$ with $P(D_{0})=1$. Here and in what follows, the convention $\inf \emptyset =\infty$ is used.
Finally, let $G=(G_t)_{t\leq T}$ be an adapted process describing the payoff for stopping at time $t$. The value of $G_{t}$ outside $D_{t}$ will not matter; we set $G_t=\Delta$ on $D_t^c$ for notational purposes, where $\Delta$ is an auxiliary state with the convention that $0\cdot \Delta=0$. We assume throughout that 
$E[\sup_{t\leq T} |G_t|1_{D_{t}}]<\infty$. 
Since we are interested in events that happen strictly before $\sigma$, including the case where $\sigma$ never happens, it will be useful to introduce the notation
\[
  s \lhd t \quad\Longleftrightarrow\quad  s < t \;\mbox{ or }\; t=\infty \qquad \mbox{for}\quad s,t\in [0,\infty].
\]
We can then consider the \emph{precommitted} optimal stopping problem at the initial time,
\begin{equation}\label{eq:precommProblem}
  V_{pre}=\sup_{\tau\le T,\, P(\tau\lhd \sigma)>0} \frac{E[G_\tau 1_{\{\tau\lhd \sigma\}}]}{P(\tau\lhd \sigma)}.
\end{equation}
Note that the supremum only runs over stopping times $\tau$ which avoid conditioning on a nullset and that the set of such times always includes $\tau\equiv0$.

\begin{example}[Markovian Setting]\label{ex:MarkovChain}
  Let $X$ be a Markov chain with values in a separable metric space $\X$ starting at $X_{0}=x_{0}$, let $B\subseteq \X$ be a measurable subset containing $x_{0}$ and let $\sigma=\inf\{t\geq0:\,X_{t}\notin B\}$ be the first exit time from~$B$. Then, our model entails that we only evaluate states of the world where the trajectory of~$X$ lies in~$B$ up to the stopping time~$\tau$. A possible specification of the payoff is $G_{t}=\delta^{t}g(t,X_{t})$ for a deterministic function $g$ and a discount factor $\delta\in(0,1]$. More generally, the set $B$ can be time-dependent.
\end{example}

The conditional optimal stopping problem~\eqref{eq:precommProblem} reduces to a classical optimal stopping problem when $\sigma=\infty$. But in general, the conditioning in the definition of the expected payoff for~$\tau$ depends on~$\tau$ itself, so that it cannot be reduced to a classical stopping problem.

\subsection{Equilibria}\label{se:equilib}

The following example illustrates that the optimization problem~\eqref{eq:precommProblem} is time-inconsistent in the sense that an optimal stopping strategy for an agent today may not be optimal in the future; that is, if she reconsiders her strategy at a future time using a conditional criterion, she may contradict her previous decision.

\begin{example}\label{ex:timeInconsistent}
  Consider a two-period binomial tree with $\Omega=\{uu, ud, du, dd\}$ as illustrated in Figure~\ref{fig:eg1}, where $u$ stands for up and $d$ for down. The conditional probabilities are $1/2$ on every edge and the numbers at each node represent the payoff $G$. The domain of relevance includes all states except~$dd$; i.e., the dashed line indicates the exit from the domain.
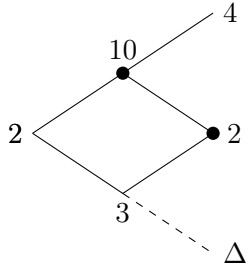
\begin{figure}[h]
\begin{center}
\begin{tikzpicture}[scale=0.8, dot/.style={circle,fill=black,minimum size=5pt,inner sep=0pt,outer sep=-1pt}]
\draw (0,0)node[left]{2} -- (1.5,1)node[dot,label=above: 10]{}--(3,2)node[right]{4};
\draw (0,0)node[left]{2} -- (1.5,-1)node[below]{3}--(3,0)node[dot,label=right: 2]{}--(1.5,1);
\draw[dashed] (1.5,-1)--(3,-2)node[right]{$\Delta$};
\end{tikzpicture}
\caption{The binomial tree of Example~\ref{ex:timeInconsistent}.}
\label{fig:eg1}
\end{center}
\end{figure}
Since there are only five distinct stopping times in this model, once can easily compute all possible payoffs and observe that the unique optimizer of~\eqref{eq:precommProblem} is the stopping time $\tau_{pre}$ with $\tau_{pre}(uu)=\tau_{pre}(ud)=1$ and $\tau_{pre}(du)=\tau_{pre}(dd)=2$. To wit, it is optimal to stop at $t=1$ if we have moved up in the first step and at $t=2$ otherwise. The obtained payoffs are illustrated by the solid dots and the associated value is $V_{pre}=10\cdot \frac{2}{3}+2\cdot \frac{1}{3}=\frac{22}{3}$.

Next, consider an analogous optimization problem for an agent who solves the problem conditionally on starting in the down state at $t=1$. This agent has only two options, either to stop immediately with payoff 3 or to wait until the horizon and receive an expected reward of $2$ (since the expectation is conditioned on remaining inside the domain). Thus, this agent prefers to stop, and that is not consistent with $\tau_{pre}$. In summary, if the first agent solves~\eqref{eq:precommProblem}  and reconsiders her own strategy at $t=1$ in the down state using the natural conditional criterion, she will overturn her previous decision.
\end{example}

For the remainder of the paper we focus on an uncommitted sophisticated agent in the sense of~\cite{Strotz.55} (see~\cite{KarnamMaZhang.17} for a recent paper surveying other approaches). She thinks of her ``future selves'' at various times and states as other agents that will optimize their choices when subsequent decisions are considered as given. Thus, we look for a policy which future selves will not override. A policy is a collection of binary decisions (stop or continue), one for each time and state, and an equilibrium is a policy such that no agent is incentivized to deviate.

Before formalizing this, let us observe that each agent faces the constraint of not conditioning on a null event. That is, any agent is forced to stop if continuing would lead to exiting the domain with probability one in the next step. 
Thus, the problem has the (random) \emph{effective time horizon}
\[
  T_{e}:=T\wedge \inf\{0\le t<T:\,P(D_{t+1}|\cF_{t})=0\}.
\]
The following adapts the basic notions of \cite{HuangNguyenHuu.18, HuangZhou.17} to our problem of conditional stopping (instead of non-exponential discounting) and extends them to a non-Markovian setting.

\begin{definition}
  A \emph{stopping policy} is a $\{0,1\}$-valued adapted process $\theta=(\theta_{t})_{t\in\T}$. We interpret $\theta_{t}(\omega)=1$ as the agent at $(t,\omega)$ choosing to stop and $\theta_{t}(\omega)=0$ as continuing.
We also introduce the \emph{continuation stopping time}
\[
  \cL_{t}\theta=\inf\{s>t: \theta_s=1\};
\]
this is the stopping time induced by $\theta$ for a time-$t$ agent who decides to continue. A stopping policy $\theta$ is called \emph{admissible} if 
\[
  P(\cL_{t}\theta\lhd \sigma |\cF_{t})>0\quad\mbox{for}\quad t<T_{e} \qquad\mbox{and}\qquad  \theta_{t}=1\quad\mbox{for}\quad t\geq T_{e}.
\]
We denote by $\Theta$ the set of all admissible stopping policies.
\end{definition}

Admissibility implies that every time-$t$ agent with $t<T_{e}$ has a well-defined \emph{continuation value}
\begin{align*}
  J_{t}(\theta)
  =\frac{E[G_{\cL_{t}\theta} 1_{\{\cL_{t}\theta \lhd \sigma\}}|\cF_{t}]}{P(\cL_{t}\theta \lhd \sigma|\cF_{t})}, \quad t<T_{e}.
\end{align*}
Naturally, she compares $J_{t}(\theta)$ with her stopping value $G_{t}$ and prefers the larger one, or she is invariant if they are equal. (Agents with $t=T_{e}$ are forced to stop, so there is no decision to be taken. The value of $\theta_{t}$ for $t>T_{e}$ is unimportant and set to $1$ only for specificity.) If we start with some $\theta\in \Theta$ and all agents simultaneously update their choice according to this preference while using the convention that invariant agents stick to their preexisting decision, we are led to the updated stopping policy
\[
  \Phi(\theta)_t=\begin{cases}
    1 & \mbox{if } t< T_{e} \mbox{ and } G_t>J_{t}(\theta),\\
    \theta_{t} & \mbox{if } t< T_{e} \mbox{ and } G_t=J_{t}(\theta),\\
    0 & \mbox{if } t< T_{e} \mbox{ and } G_t<J_{t}(\theta),\\
    1 & \mbox{if } t\geq T_{e}.
  \end{cases} 
\]  

\begin{definition}
  An admissible stopping policy $\theta$ is an \emph{equilibrium (stopping policy)} if $\Phi(\theta)=\theta$.
\end{definition}

This notion corresponds to a subgame perfect Nash equilibrium: each agent is behaving optimally if the future agents' choices are seen as given. 

\begin{example}\label{ex:equilibBinomEx}
  Consider the setting of Example~\ref{ex:timeInconsistent}. In any admissible stopping policy, the time-$2$ agents have to stop because of the time horizon. Both time-$1$ agents then prefer to stop as their stopping values (10 and 3) exceed the expected continuation values (3 and 2). Given those decisions, the expected continuation value for the time-$0$ agent is $(10+3)/2$ which exceeds the stopping value of $2$. It easily follows that the unique equilibrium stopping policy is given by $\theta_{0}=0$, $\theta_{1}\equiv1$ and $\theta_{2}\equiv1$. The induced stopping time for the time-$0$ agent is $\tau\equiv1$. This differs from the precommitted-optimal stopping time $\tau_{pre}$ of Example~\ref{ex:timeInconsistent}, and the associated expected reward of $(10+3)/2$ is smaller than the precommitted value function $V_{pre}$.
\end{example} 

In a Markov chain setting, a natural subset of stopping policies is also of a Markovian form. Denoting by $\sigma(Y)$ the $\sigma$-field generated by a random variable~$Y$, this can be formalized as follows.

\begin{definition}
  Consider the Markovian setting of Example~\ref{ex:MarkovChain}. A stopping policy $\theta\in\Theta$ is called \emph{Markovian} if $\theta_{t}$ is $\sigma(X_{t},1_{D_{t}})$-measurable for all $t\in\T$. 
\end{definition}

If $\theta$ is admissible, this is equivalent to the existence of measurable subsets $R_{t}\subseteq \X$ such that 
\[
  \theta_{t}=1_{\{X_{t}\in R_{t}\} \cup D_{t}^{c}}.
\]
Note that such equilibria are actually path-dependent through $D_{t}$, but this is the least amount of path-dependence compatible with our general definition of admissibility. In the Markovian setting, one could assume without loss of generality that all exit states (states outside $B$) are absorbing. Then, we have $D_{t}=\{X_{t}\in B\}$ a.s.\ and one can require that $\theta_{t}$ is (a.s.) $\sigma(X_{t})$-measurable.

\section{Finite-Horizon Equilibria}\label{se:finiteHorizon}

In this section we discuss existence, uniqueness and construction of equilibria for the case $T<\infty$. 

  In the classical optimal stopping problem, the value function and the optimal decision of a time-$t$ agent are completely determined by the value functions of the agents at time $t+1$. This fact lies at the heart of the backward recursion of dynamic programming and the Snell envelope theory. In the problem at hand, however, the conditioning event in the computation of the continuation value $J_{t}(\theta)$ depends on the decisions of many future selves, not only the ones at time $t+1$. This suggests introducing an additional process~$S$ to keep track of the probability of the conditioning event given the stopping policy of all future selves; we call~$S$ the \emph{survival process} since it is related to survival probabilities. In Theorem~\ref{th:backwardAlgo} below we provide a backward recursion to construct an equilibrium; its recursive formula for $J_{t}(\theta)$ resembles the classical case where it would be the conditional expectation of the value process at time $t+1$, but now this expectation is calculated under a new measure obtained by using the normalized survival process as a density.
  
Just like in classical optimal stopping, one type of non-uniqueness arises when an agent is invariant; that is, when the stopping and continuation values happen to be equal: $J_{t}(\theta)=G_{t}$. Thus, an algorithm for the construction of an equilibrium necessarily comes with a specific choice. The theorem stated below uses \emph{early stopping preference,} meaning that invariant agents choose to stop, and it yields the unique equilibrium with that preference. In the classical setting, this corresponds to the first time that the Snell envelope hits the obstacle. In general, a \emph{stopping preference} is an adapted process with binary values, defining for each $(t,\omega)$ the choice in the case of invariance. For each such preference, one can write an algorithm similar to Theorem~\ref{th:backwardAlgo} and it delivers the unique equilibrium with that preference. Conversely, every finite-horizon equilibrium arises in that way.

\begin{theorem}\label{th:backwardAlgo}
  Let $T<\infty$ and recall that $G_t=\Delta$ on $D_t^c$. Define the value process $(V_{t})_{t\leq T}$ and the survival process $(S_{t})_{t\leq T}$ as follows. Set $V_T=G_T$ and $S_T=1_{D_T}$. For $t=T-1,\dots,0$, set   
  \[
    J_{t}=\frac{E[S_{t+1}V_{t+1}|\cF_{t}]}{E[S_{t+1}|\cF_{t}]}\quad \mbox{if} \quad t<T_{e},
  \]
  \[
  \begin{cases}
    V_{t}=G_{t} \mbox{ and } S_{t}=1 & \mbox{if } t< T_{e} \mbox{ and } G_t\geq J_{t},\\
    V_{t}=J_{t} \mbox{ and } S_{t}=E[S_{t+1}|\cF_{t}] & \mbox{if } t< T_{e} \mbox{ and } G_t< J_{t},\\
    V_{t}=G_{t} \mbox{ and } S_{t}=1_{D_{t}} & \mbox{if } t\geq T_{e}.
  \end{cases}
  \]
Then $\theta:=1_{\{G_{t}\geq V_{t}\}}$ is the unique equilibrium with preference for early stopping.
\end{theorem}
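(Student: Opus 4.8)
The plan is to show two things: first, that the policy $\theta:=1_{\{G_t\geq V_t\}}$ produced by the recursion is an equilibrium, i.e.\ $\Phi(\theta)=\theta$; and second, that it is the \emph{unique} equilibrium whose invariant agents stop. The central object tying the recursion to the equilibrium condition is the survival process $S$: the key claim I would isolate and prove first is that, for every $t<T_e$, the process $S_{t+1}$ computed in the recursion is a version of the conditional survival probability induced by the continuation policy, namely
\[
  E[S_{t+1}\mid\cF_t]=P(\cL_t\theta\lhd\sigma\mid\cF_t)\cdot c_t
\]
up to the appropriate normalization, and that the recursive $J_t$ equals the continuation value $J_t(\theta)$. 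Here the survival process acts as an (unnormalized) density: $S_{t+1}V_{t+1}$ integrated against $\cF_t$ recovers the conditional expectation of the stopped payoff $G_{\cL_t\theta}1_{\{\cL_t\theta\lhd\sigma\}}$, so that dividing by $E[S_{t+1}\mid\cF_t]$ reproduces the conditioning that defines $J_t(\theta)$.

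The natural way to establish this identification is by backward induction on $t$, running simultaneously three statements: (a) $V_t=J_{t-1}$-type telescoping, more precisely that $E[S_{t+1}V_{t+1}\mid\cF_t]=E[G_{\cL_t\theta}1_{\{\cL_t\theta\lhd\sigma\}}\mid\cF_t]$; (b) that $E[S_{t+1}\mid\cF_t]=P(\cL_t\theta\lhd\sigma\mid\cF_t)$; and (c) that $V_t=G_t\vee J_t$ whenever $t<T_e$, with $V_t=G_t$ on the boundary case $t\geq T_e$. The base case $t=T$ is immediate from $V_T=G_T$, $S_T=1_{D_T}$. For the inductive step I would split into the three branches of the recursion. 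In the stopping branch ($G_t\geq J_t$) the policy sets $\theta_t=1$, so the continuation stopping time of a time-$(t-1)$ agent lands at $t$ with payoff $G_t$ on $D_t$, matching $V_t=G_t$ and $S_t=1$. In the continuation branch ($G_t<J_t$) we have $\theta_t=0$, so $\cL_{t-1}\theta=\cL_t\theta$ and the quantities propagate unchanged except for the survival renormalization $S_t=E[S_{t+1}\mid\cF_t]$, which is exactly what makes (b) telescope correctly. The boundary branch $t\geq T_e$ uses admissibility (forced stopping) and the definition of $T_e$ to control the degenerate conditioning.

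Once (a)--(c) are in place, the equilibrium property is almost immediate: by (c) and the identification $J_t=J_t(\theta)$, an agent stops under $\theta$ exactly when $G_t\geq J_t(\theta)$, which is precisely the fixed-point condition $\Phi(\theta)=\theta$ under early-stopping preference (recall $\Phi$ leaves invariant agents at their prior choice, and here $\theta_t=1$ on $\{G_t=J_t\}$, consistent with stopping). Admissibility of $\theta$ must also be checked: I would verify $P(\cL_t\theta\lhd\sigma\mid\cF_t)>0$ for $t<T_e$ using (b) together with the nondegeneracy built into the definition of $T_e$, since $E[S_{t+1}\mid\cF_t]>0$ must be argued from the recursion not collapsing before $T_e$. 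For uniqueness, I would argue that any equilibrium $\theta'$ with early-stopping preference must, by backward induction, agree with $\theta$ at every time: the time-$t$ decision of $\theta'$ is forced by the values $J_t(\theta')$, which depend only on the strictly-future decisions, and those coincide with $\theta$ by the induction hypothesis; hence $\theta'_t=\Phi(\theta')_t=\Phi(\theta)_t=\theta_t$, the tie-breaking on $\{G_t=J_t\}$ being pinned down by the shared preference.

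The main obstacle I anticipate is the bookkeeping around the effective horizon $T_e$ and the degenerate conditioning it encodes. The recursion and the continuation value $J_t(\theta)$ are only defined where $P(\cL_t\theta\lhd\sigma\mid\cF_t)>0$, and the induction must carefully ensure that $E[S_{t+1}\mid\cF_t]$ does not vanish on $\{t<T_e\}$ so that the ratio defining $J_t$ is well-posed $P$-a.s.; this requires propagating strict positivity of the survival process backward and handling the almost-sure qualifiers uniformly across the three branches. A secondary subtlety is the measure-change interpretation: one must verify that $S_{t+1}/E[S_{t+1}\mid\cF_t]$ is a genuine conditional density and that the tower property assembles the multi-step conditioning correctly, which is where the careful statement (a) earns its keep.
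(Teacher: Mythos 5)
Your proposal is correct and follows essentially the same route as the paper: the two identities you isolate as (a) and (b) are precisely the content of the paper's key lemma (namely $J_t=J_t(\theta)$, $E[S_{t+1}\mid\cF_t]=P(\cL_t\theta\lhd\sigma\mid\cF_t)$, together with the explicit formula for $S_t$), proved there by the same backward induction with the same case split on $\theta_{t+1}$ and the same handling of admissibility and of the boundary at $T_e$, after which the fixed-point property and the uniqueness-by-backward-induction argument go exactly as you describe. The only blemish is the spurious normalizing factor $c_t$ in your first display---the identity holds with $c_t=1$, as your statement (b) later correctly asserts.
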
 

In Section~\ref{se:Snell} we will call $(V,S)$ a \emph{Snell pair} and discuss its connection to Snell envelopes. A generalization including the infinite-horizon case will also be provided. We nevertheless opt to provide an elementary and self-contained treatment of the finite-horizon in the present section.

\begin{proof}[Proof of Theorem~\ref{th:backwardAlgo}.]
  We show in Lemma~\ref{le:survivalExpr} below that $\theta$ is admissible and that $J_{t}$ coincides with the continuation value $J_{t}(\theta)$ of~$\theta$. Once that is established, the very definition of $\theta$ shows that
  $$
  \theta=1_{\{G_{t}\geq V_{t}\}}= 
    \begin{cases}
    0 & \mbox{if } t< T_{e} \mbox{ and } G_t< J_{t}(\theta),\\
    1 & \mbox{otherwise}
  \end{cases}
$$
and hence $\theta$ is an equilibrium stopping policy with early stopping preference. On the other hand, the boundary condition at~$T_{e}$ and a backward induction allow us to see that there is at most one such equilibrium.
\end{proof}

\begin{lemma}\label{le:survivalExpr}
  In the setting of Theorem~\ref{th:backwardAlgo}, $\theta$ is admissible and
  \begin{align}\label{eq:survivalCondExp}
    J_{t}& = J_{t}(\theta), \quad t<T_{e}, \nonumber \\
    E[S_{t+1}|\cF_{t}] &= P(\cL_{t}\theta \lhd \sigma|\cF_{t}), \quad t<T_{e},
  \end{align}
  and for $t\leq T$ we have
  \[
    S_{t}=\begin{cases}
    P(\cL_{t}\theta \lhd \sigma|\cF_{t}) & \mbox{on } D_{t}\cap\{\theta_{t}=0\},\\
    1 & \mbox{on } D_{t}\cap\{\theta_{t}=1\},\\
    0 & \mbox{on } D_{t}^{c}.
  \end{cases}  
  \]
\end{lemma}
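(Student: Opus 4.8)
The plan is to prove the three assertions by a single backward induction on $t$, running from $t=T$ down to $t=0$, since the processes $V$, $S$, and $\theta$ are all defined recursively and each statement at time $t$ will depend on the analogous statements at time $t+1$. The natural order is to establish the explicit formula for $S_t$ first (as it is the most detailed), and then read off the two identities in~\eqref{eq:survivalCondExp} and admissibility as consequences.

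For the base case $t=T$, we have $T\ge T_e$, so $\theta_T=1_{\{G_T\ge V_T\}}=1$ everywhere (since $V_T=G_T$), and $S_T=1_{D_T}$ by definition; this matches the claimed formula for $S_t$ (the case $D_t\cap\{\theta_t=0\}$ is empty). For the inductive step, I would first handle the easy regime $t\ge T_e$, where $\theta_t=1$ on $D_t$ and $S_t=1_{D_t}$ directly from the recursion, again matching the formula. The substantive case is $t<T_e$. Here the key computation is to verify that $E[S_{t+1}\,|\,\cF_t]=P(\cL_t\theta\lhd\sigma\,|\,\cF_t)$. The idea is to decompose $\{\cL_t\theta\lhd\sigma\}$ according to the value of $s:=\cL_t\theta>t$, and to use the induction hypothesis for $S_{t+1}$ together with the tower property. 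Concretely, on $\{\theta_{t+1}=1\}\cap D_{t+1}$ the future self stops immediately so $\cL_t\theta=t+1\lhd\sigma$ and $S_{t+1}=1$; on $\{\theta_{t+1}=0\}\cap D_{t+1}$ we have $\cL_t\theta=\cL_{t+1}\theta$ and $S_{t+1}=P(\cL_{t+1}\theta\lhd\sigma\,|\,\cF_{t+1})$; and on $D_{t+1}^c$ we have $S_{t+1}=0$ while the event $\{\cL_t\theta\lhd\sigma\}$ cannot occur there. Taking $\cF_t$-conditional expectations and applying the tower property to the middle term should collapse everything to $P(\cL_t\theta\lhd\sigma\,|\,\cF_t)$.

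Once $E[S_{t+1}\,|\,\cF_t]=P(\cL_t\theta\lhd\sigma\,|\,\cF_t)$ is in hand, admissibility at time $t$ follows because $t<T_e$ forces $P(D_{t+1}\,|\,\cF_t)>0$, and one checks this propagates to $P(\cL_t\theta\lhd\sigma\,|\,\cF_t)>0$ so that the denominator in $J_t$ is strictly positive and $J_t=J_t(\theta)$ is well defined. The identity $J_t=J_t(\theta)$ then follows from an analogous decomposition of the numerator $E[G_{\cL_t\theta}1_{\{\cL_t\theta\lhd\sigma\}}\,|\,\cF_t]$: using the induction hypothesis that $J_{t+1}=J_{t+1}(\theta)$ and the definition $V_{t+1}=G_{t+1}$ when $\theta_{t+1}=1$ and $V_{t+1}=J_{t+1}$ when $\theta_{t+1}=0$, one recognizes $S_{t+1}V_{t+1}$ as exactly the integrand whose conditional expectation reproduces the numerator. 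Finally, the explicit formula for $S_t$ itself is immediate from the recursion once we substitute $E[S_{t+1}\,|\,\cF_t]=P(\cL_t\theta\lhd\sigma\,|\,\cF_t)$: on $\{\theta_t=0\}$ (equivalently $G_t<J_t$) we have $S_t=E[S_{t+1}\,|\,\cF_t]$, on $\{\theta_t=1\}\cap D_t$ we have $S_t=1$, and on $D_t^c$ all quantities vanish.

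I expect the main obstacle to be the careful bookkeeping in the decomposition establishing $E[S_{t+1}\,|\,\cF_t]=P(\cL_t\theta\lhd\sigma\,|\,\cF_t)$ and the parallel one for $J_t=J_t(\theta)$. The subtlety is correctly tracking the relationship between $\cL_t\theta$ and $\cL_{t+1}\theta$ on the three regions $\{\theta_{t+1}=1\}\cap D_{t+1}$, $\{\theta_{t+1}=0\}\cap D_{t+1}$, and $D_{t+1}^c$, and verifying that the $\lhd$ convention (which treats $\sigma=\infty$ specially, though here $T<\infty$ so $\sigma\le T$) is handled consistently—in particular that the event $\{\cL_t\theta\lhd\sigma\}$ is genuinely $\cF_{t+1}$-measurable after conditioning so that the tower property applies cleanly. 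A secondary point requiring care is confirming that the definition $\theta_t=1_{\{G_t\ge V_t\}}$ agrees with the case split $G_t\ge J_t$ versus $G_t<J_t$ used in the recursion for $V_t$ and $S_t$; this is where the convention $V_t=G_t$ in the invariant case $G_t=J_t$ ensures $\theta_t=1$, matching early stopping preference.
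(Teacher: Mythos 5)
Your proposal follows essentially the same route as the paper's proof: a backward induction built on the three-way decomposition over $D_{t+1}\cap\{\theta_{t+1}=1\}$, $D_{t+1}\cap\{\theta_{t+1}=0\}$ and $D_{t+1}^{c}$, with the tower property collapsing both $E[S_{t+1}|\cF_{t}]$ into $P(\cL_{t}\theta \lhd \sigma|\cF_{t})$ and $E[S_{t+1}V_{t+1}|\cF_{t}]$ into the numerator of $J_{t}(\theta)$, and admissibility propagated by the same induction. One minor slip in your parenthetical: even with $T<\infty$ the stopping time $\sigma$ need not satisfy $\sigma\le T$ (it can exceed $T$ or equal $\infty$), but since your case analysis never uses that claim and handles the $\lhd$ convention correctly, the argument stands as written.
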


\begin{proof}
  We first check that $\theta$ is admissible. Indeed, we have $\theta_{t}=1$ for $t\geq T_{e}$, and if $t<T_{e}$, backward induction shows that $P(\cL_{t}\theta\lhd \sigma |\cF_{t})>0$.
  
  Next, we prove the formula for $S_{t}$. The last two cases are clear from the definition. Thus, we focus on showing $S_{t}=P(\cL_{t}\theta \lhd \sigma|\cF_{t})$ on $D_{t}\cap\{\theta_{t}=0\}$. For $t\geq T_{e}$ we have $\theta_{t}=1$ so nothing needs to be proved. For $t<T_{e}$ we argue by induction. Indeed, using the induction hypothesis to obtain~$(a)$ below,
  \begin{align*}
  &S_{t}
  = E[S_{t+1}|\cF_{t}]\\
  &\stackrel{(a)}{=} E\big[1_{D_{t+1}}1_{\{\theta_{t+1}=0\}} P(\cL_{t+1}\theta \lhd \sigma|\cF_{t+1}) + 1_{D_{t+1}}1_{\{\theta_{t+1}=1\}} \cdot 1 + 1_{D_{t+1}^{c}} \!\cdot 0\big|\cF_{t}\big] \\
  &\stackrel{(b)}{=} E[P(\cL_{t}\theta \lhd \sigma|\cF_{t+1})|\cF_{t}]
  =P(\cL_{t}\theta \lhd \sigma|\cF_{t}),    
  \end{align*} 
  where~$(b)$ holds due to
  \begin{equation}\label{eq:proofSurvivalRec}
  P(\cL_{t}\theta \lhd \sigma|\cF_{t+1})
  =\begin{cases}
    P(\cL_{t+1}\theta \lhd \sigma|\cF_{t+1}) & \mbox{on } D_{t+1}\cap \{\theta_{t+1}=0\},\\
    1 & \mbox{on } D_{t+1}\cap \{\theta_{t+1}=1\},\\
    0 & \mbox{on } D_{t+1}^{c}.
  \end{cases}
\end{equation}
   In the last identity, the first case holds since $\theta_{t+1}=0$ implies that $\cL_{t}\theta$ and $\cL_{t+1}\theta$ agree. The second case holds because  $\theta_{t+1}=1$ entails that $\cL_{t}\theta=t+1$ and $t+1<\sigma$ on $D_{t+1}$. Finally, on $D_{t+1}^{c}$ we have $\sigma\leq t+1\leq \cL_{t}\theta$. This completes the proof for $S_{t}$ and we note that~\eqref{eq:survivalCondExp} was obtained as part of the first display above.
  It remains to show that 
  \[
    J_{t}(\theta)\equiv\frac{E[G_{\cL_{t}\theta} 1_{\{\cL_{t}\theta \lhd \sigma\}}|\cF_{t}]}{P(\cL_{t}\theta \lhd \sigma|\cF_{t})} = \frac{E[S_{t+1}V_{t+1}|\cF_{t}]}{E[S_{t+1}|\cF_{t}]}\equiv J_{t}, \quad t<T_{e}.
  \]
  Since the denominators are non-zero and agree by~\eqref{eq:survivalCondExp}, it suffices to show
  \begin{equation}\label{eq:proofContRecClaim}
    E[G_{\cL_{t}\theta} 1_{\{\cL_{t}\theta \lhd \sigma\}}|\cF_{t}]=E[S_{t+1}V_{t+1}|\cF_{t}],\quad t < T.
  \end{equation}
  Indeed, \eqref{eq:proofContRecClaim} is clear for $t\geq T_{e}$ since that implies $P(\cL_{t}\theta \lhd \sigma)=0$. It is also clear for $t=T-1$. For $t< T_{e}\wedge (T-1)$ we argue by backward induction. We first observe that, by similar arguments as below~\eqref{eq:proofSurvivalRec},
  \begin{equation}\label{eq:proofContRec}
  G_{\cL_{t}\theta} 1_{\{\cL_{t}\theta \lhd \sigma\}}
  =\begin{cases}
    G_{\cL_{t+1}\theta} 1_{\{\cL_{t+1}\theta \lhd \sigma\}} & \mbox{on } D_{t+1}\cap \{\theta_{t+1}=0\},\\
    V_{t+1}=S_{t+1}V_{t+1} & \mbox{on } D_{t+1}\cap \{\theta_{t+1}=1\},\\
    0=S_{t+1}V_{t+1} & \mbox{on } D_{t+1}^{c}.
  \end{cases}
  \end{equation}  
  On the set $D_{t+1}\cap \{\theta_{t+1}=0\}$ occurring in the first case of~\eqref{eq:proofContRec} we have
  \[
    E[G_{\cL_{t+1}\theta} 1_{\{\cL_{t+1}\theta \lhd \sigma\}}|\cF_{t+1}] = E[S_{t+2}V_{t+2}|\cF_{t+1}] 
    = S_{t+1}J_{t+1}= S_{t+1}V_{t+1},
  \]
  where the three equalities follow from the induction hypothesis, the definitions of $J_{t+1}$ and $S_{t+1}$, and $J_{t+1} = V_{t+1}$ on $\{\theta_{t+1}=0\}$, respectively. 
  As a result, we can take conditional expectations in~\eqref{eq:proofContRec} and obtain that the identity $E[G_{\cL_{t}\theta} 1_{\{\cL_{t}\theta \lhd \sigma\}}|\cF_{t+1}] = S_{t+1}V_{t+1}$ holds everywhere. The tower property then yields the claim~\eqref{eq:proofContRecClaim} and the proof is complete.
\end{proof} 

\begin{corollary}\label{co:finiteHorizonMarkov}
  In the Markovian setting of Example~\ref{ex:MarkovChain} with $T<\infty$, there exists a unique equilibrium with preference for early stopping and that equilibrium is Markovian.
\end{corollary}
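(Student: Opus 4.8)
The existence and uniqueness of an equilibrium with preference for early stopping is exactly the content of Theorem~\ref{th:backwardAlgo}, so the only genuinely new assertion is that the equilibrium $\theta=1_{\{G_t\ge V_t\}}$ produced there is Markovian. Since in this setting the payoff is a deterministic function of the state, $G_t=g_t(X_t)$ on $D_t$, it suffices to show by backward induction that $V_t$ and $S_t$ are $\sigma(X_t,1_{D_t})$-measurable; more precisely, that there are measurable functions $v_t,s_t$ on $\X$ with $V_t=v_t(X_t)$, $S_t=s_t(X_t)$ on $D_t$, while $V_t=\Delta$ and $S_t=0$ on $D_t^c$. Granting this, $\theta_t=1_{\{X_t\in R_t\}\cup D_t^c}$ with $R_t=\{x:g_t(x)\ge v_t(x)\}$, which is precisely the Markovian form; note that on $D_t^c$ we have $G_t=V_t=\Delta$, so $\theta_t=1$ there.

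The point requiring care is the role of the random effective horizon $T_e$, on which the recursion branches. Writing $p_t(x)=P(X_{t+1}\in B\mid X_t=x)$ and using $D_{t+1}=D_t\cap\{X_{t+1}\in B\}$ together with the Markov property gives $P(D_{t+1}\mid\cF_t)=1_{D_t}\,p_t(X_t)$. I would first record two consequences. (i) $T_e\le\sigma$ a.s.: on $\{\sigma=k\}$ with $k<T$ one has $\omega\in D_k^c$, hence $P(D_{k+1}\mid\cF_k)=1_{D_k}p_k(X_k)=0$, so $k$ belongs to the index set defining $T_e$ and $T_e\le k=\sigma$; the case $\sigma\ge T$ is trivial. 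Consequently $D_t^c\subseteq\{t\ge T_e\}$, so on $D_t^c$ the recursion is in its third branch and indeed $V_t=\Delta$, $S_t=0$ there, consistent with Lemma~\ref{le:survivalExpr}. (ii) On $D_t$ the event $\{t<T_e\}$ agrees a.s.\ with the $\sigma(X_t)$-event $\{t<T,\ p_t(X_t)>0\}$: for $s<t$ the set $\{p_s(X_s)=0\}\cap D_{s+1}$ is null, since taking the $\cF_s$-conditional expectation and using $P(X_{s+1}\in B\mid\cF_s)=p_s(X_s)$ makes its probability vanish, and $D_t\subseteq D_{s+1}$; thus in $\{t<T_e\}\cap D_t=\{t<T\}\cap\bigcap_{s\le t}\{p_s(X_s)>0\}\cap D_t$ only the term $s=t$ survives a.s.

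With these facts the induction runs cleanly. At $t=T$ we have $V_T=g_T(X_T)$ on $D_T$ and $S_T=1_{D_T}$, so $v_T=g_T$, $s_T\equiv1$. Assuming the representation at $t+1$, I use $S_{t+1}=0$ on $D_{t+1}^c$ and the convention $0\cdot\Delta=0$ to write, on $D_t$, both $S_{t+1}V_{t+1}=(s_{t+1}v_{t+1})(X_{t+1})1_{\{X_{t+1}\in B\}}$ and $S_{t+1}=s_{t+1}(X_{t+1})1_{\{X_{t+1}\in B\}}$. Taking $\cF_t$-conditional expectations and applying the Markov property yields $E[S_{t+1}V_{t+1}\mid\cF_t]=1_{D_t}h_t(X_t)$ and $E[S_{t+1}\mid\cF_t]=1_{D_t}k_t(X_t)$ for measurable $h_t,k_t$; hence on $D_t\cap\{t<T_e\}$, where $k_t(X_t)>0$, the continuation value $J_t=h_t(X_t)/k_t(X_t)$ is $\sigma(X_t)$-measurable. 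By observation (ii) the branch indicator $1_{\{t<T_e\}}$ is $\sigma(X_t)$-measurable on $D_t$, and $G_t=g_t(X_t)$ there, so each of the three recursion branches defines $V_t,S_t$ on $D_t$ as an explicit $\sigma(X_t)$-measurable function, while observation (i) supplies $V_t=\Delta$, $S_t=0$ on $D_t^c$. This closes the induction and yields the Markovianity of $\theta$.

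The step I expect to be the main obstacle is the treatment of $T_e$: a priori the active branch of the recursion at $(t,\omega)$ could depend on the entire history $X_0,\dots,X_t$ through the positivity conditions $P(D_{s+1}\mid\cF_s)>0$ for $s\le t$, which would break Markovianity. Observations (i) and (ii) are exactly what collapse this apparent path-dependence on the relevant set $D_t$ down to a condition on $X_t$ alone, the key being that states $x$ with $p_s(x)=0$ are a.s.\ incompatible with remaining in the domain at the next step. Everything else is a routine propagation of $\sigma(X_t)$-measurability through conditional expectations by the Markov property, so once the $T_e$ reduction is in place the argument is essentially mechanical.
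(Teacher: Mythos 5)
Your proof is correct and follows essentially the same route as the paper: the paper's proof is a one-line observation that $G_t$ and $V_t$ in the recursion of Theorem~\ref{th:backwardAlgo} are $\sigma(X_t,1_{D_t})$-measurable, hence so is $\theta_t=1_{\{G_t\geq V_t\}}$, which is exactly the backward induction you carry out in detail. Your extra care with the effective horizon---showing $T_e\le\sigma$ a.s.\ and that $\{t<T_e\}$ collapses a.s.\ on $D_t$ to the $\sigma(X_t)$-event $\{t<T,\,p_t(X_t)>0\}$---fills in a point the paper leaves implicit, and correctly so.
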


\begin{proof}
  We observe that $G_{t}$ and $V_{t}$ in Theorem~\ref{th:backwardAlgo} are $\sigma(X_{t},1_{D_{t}})$-measurable for all $t$, and then so is $\theta_{t}$.
\end{proof} 

One can note that the stopping preference is important in the above result: it is easy to construct examples of non-Markovian equilibria by specifying a path-dependent stopping preference and taking the reward function~$g$ to be constant.

\section{Infinite-Horizon Equilibria: Existence}\label{se:infiniteHorizonExistence}

The following result establishes the existence of infinite-horizon equilibria in a setting that includes Markov chains with a countable state space.

\begin{theorem}\label{th:infiniteHorizonExistence}
  Suppose that $\cF_{t}$ is a.s.\ discrete\footnote{We call a $\sigma$-field discrete if it is generated by a countable partition of $\Omega$. In the case of a Markov chain with countable state space one can define $\cF_{t}$ as the $\sigma$-field generated by the sample paths up to time $t$.} for all $t\in\T$ and that $\lim_{t\to\infty}G_{t}=G_{\infty}$ a.s. Moreover, assume that  
  \begin{equation}\label{eq:expDecay}
    \begin{array}{c}
         \mbox{$P(\exists\, t\in\T:\, G_{t}\geq0)>0$ and there exists $c>1$ such that } \\[.2em]
         \mbox{$(c^{t}G_{t})_{t\geq0}$ is uniformly bounded from above.}
    \end{array}  
  \end{equation}
  Then an equilibrium exists.
\end{theorem}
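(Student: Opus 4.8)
The plan is to obtain an infinite-horizon equilibrium as a limit of finite-horizon ones. For each $n\in\N$ let $\theta^{n}$ be the unique early-stopping equilibrium of the problem truncated at horizon $n$, which exists by Theorem~\ref{th:backwardAlgo}. Because each $\cF_{t}$ is generated by a countable partition and $\T=\N$, there are only countably many pairs consisting of a time $t$ and an atom $A\in\cF_{t}$; since $\theta^{n}_{t}$ is $\{0,1\}$-valued and constant on atoms, a diagonal extraction yields a subsequence (still written $\theta^{n}$) along which $\theta^{n}_{t}$ is eventually constant on every atom. Let $\theta^{*}_{t}$ be this eventual value, an adapted $\{0,1\}$-valued process. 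It remains to prove $\theta^{*}\in\Theta$ and $\Phi(\theta^{*})=\theta^{*}$.

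The analytic heart is the convergence of continuation values. First I would check that $\cL_{t}\theta^{n}\to\cL_{t}\theta^{*}$ pointwise in $[0,\infty]$: on $\{\cL_{t}\theta^{*}<\infty\}$ the finitely many decisions up to $\cL_{t}\theta^{*}$ have stabilized, so $\cL_{t}\theta^{n}=\cL_{t}\theta^{*}$ for large $n$, whereas on $\{\cL_{t}\theta^{*}=\infty\}$ the decisions have stabilized to $0$ on each finite horizon, so $\cL_{t}\theta^{n}\to\infty$. A case split on whether $\sigma=\infty$ then gives both $1_{\{\cL_{t}\theta^{n}\lhd\sigma\}}\to 1_{\{\cL_{t}\theta^{*}\lhd\sigma\}}$ and $G_{\cL_{t}\theta^{n}}1_{\{\cL_{t}\theta^{n}\lhd\sigma\}}\to G_{\cL_{t}\theta^{*}}1_{\{\cL_{t}\theta^{*}\lhd\sigma\}}$ pointwise; the hypothesis $\lim_{t}G_{t}=G_{\infty}$ is used precisely on the event $\{\cL_{t}\theta^{*}=\infty,\ \sigma=\infty\}$, where the payoff is $G_{\infty}$. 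Since $|G_{\cL_{t}\theta^{n}}|1_{\{\cL_{t}\theta^{n}\lhd\sigma\}}\le\sup_{s\in\T}|G_{s}|1_{D_{s}}$, which is integrable by the standing assumption, dominated convergence (applied atomwise, where conditional expectations are ordinary averages) gives convergence of the numerator $E[G_{\cL_{t}\theta^{n}}1_{\{\cL_{t}\theta^{n}\lhd\sigma\}}\,|\,\cF_{t}]$ and the denominator $P(\cL_{t}\theta^{n}\lhd\sigma\,|\,\cF_{t})$ to the corresponding quantities for $\theta^{*}$.

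I expect admissibility of $\theta^{*}$ to be the main obstacle, since a priori the limiting denominator $P(\cL_{t}\theta^{*}\lhd\sigma\,|\,\cF_{t})$ could vanish for some $t<T_{e}$. When $T_{e}<\infty$ this is settled without~\eqref{eq:expDecay}: one checks $T_{e}^{n}=T_{e}\wedge n$, so every $\theta^{n}$ with $n\ge T_{e}$ stops by force at $T_{e}$; on the event of surviving to $T_{e}$ --- which has positive conditional probability because $P(D_{s+1}\,|\,\cF_{s})>0$ for all $s<T_{e}$ --- one has $\cL_{t}\theta^{n}\le T_{e}<\sigma$, giving the uniform bound $P(\cL_{t}\theta^{n}\lhd\sigma\,|\,\cF_{t})\ge P(D_{T_{e}}\,|\,\cF_{t})>0$, which passes to the limit. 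The delicate case is $T_{e}=\infty$, where there is no forced stopping time and an unconstrained limit might continue into certain extinction; this is exactly where~\eqref{eq:expDecay} is needed. Writing $G_{t}\le Mc^{-t}$, the geometric decay forces $G_{\infty}\le 0$ and bounds every continuation value by $Mc^{-(t+1)}$, so continuing forever is asymptotically worthless, while $P(\exists\,t:\,G_{t}\ge 0)>0$ supplies a genuinely attractive stopping opportunity; together these should preclude a limit policy that continues past $\sigma$ with full conditional probability, keeping the limiting denominator positive so that $\theta^{*}$ is admissible.

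Granting admissibility, $J_{t}(\theta^{n})\to J_{t}(\theta^{*})$, and the fixed-point property follows by comparison with the stabilized decisions. Fix $t$ and an atom $A\in\cF_{t}$; then $G_{t}$ is constant on $A$, for all large $n$ one has $\theta^{n}_{t}=\theta^{*}_{t}$ on $A$, and $t<T_{e}^{n}$ holds iff $t<T_{e}$. If $t<T_{e}$ and $G_{t}>J_{t}(\theta^{*})$ on $A$, then $G_{t}>J_{t}(\theta^{n})$ for large $n$, so the equilibrium property of $\theta^{n}$ gives $\theta^{n}_{t}=1$ and hence $\theta^{*}_{t}=1=\Phi(\theta^{*})_{t}$; the strict reverse inequality is symmetric, and the invariant case $G_{t}=J_{t}(\theta^{*})$ requires nothing since $\Phi$ leaves invariant agents unchanged. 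For $t\ge T_{e}$, possible only when $T_{e}<\infty$, the forced-stopping convention gives $\theta^{n}_{t}=1$ for large $n$ and thus $\theta^{*}_{t}=1$. Hence $\Phi(\theta^{*})=\theta^{*}$, so $\theta^{*}$ is the desired equilibrium.
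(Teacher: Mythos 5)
Your overall architecture --- truncating at horizon $n$, taking the early-stopping equilibria $\theta^{n}$ from Theorem~\ref{th:backwardAlgo}, diagonalizing over the countably many (time, atom) pairs, proving convergence of $\cL_{t}\theta^{n}$ and of the continuation values, and verifying the fixed-point property by comparison with the stabilized $\theta^{n}$ --- is exactly the paper's, and those parts of your argument are sound. The genuine gap is in admissibility, which you correctly single out as the main obstacle but do not actually prove. First, your treatment of the case $T_{e}<\infty$ rests on a false implication: $P(D_{s+1}\,|\,\cF_{s})>0$ for all $s<T_{e}$ does \emph{not} yield $P(D_{T_{e}}\,|\,\cF_{t})>0$, because $T_{e}$ is a random time and death can come ``without warning.'' If from every alive node the process exits with probability $1/2$ and survives with probability $1/2$, then every one-step survival probability is positive, yet $T_{e}=\sigma$ a.s., so $D_{T_{e}}=\{T_{e}<\sigma\}$ is a null set and your claimed lower bound $P(\cL_{t}\theta^{n}\lhd\sigma\,|\,\cF_{t})\ge P(D_{T_{e}}\,|\,\cF_{t})$ reads $\ge 0$. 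In such a model the finite-horizon equilibria may continue all the way to the forced stop at the horizon, so that $P(\cL_{t}\theta^{n}\lhd\sigma\,|\,\cF_{t})\to 0$; hence this case genuinely cannot be settled ``without \eqref{eq:expDecay}'' as you assert. (Note also that $T_{e}$ being random means the dichotomy ``$T_{e}<\infty$ versus $T_{e}=\infty$'' is not a legitimate case split in the first place.)

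Second, in the regime where you do invoke \eqref{eq:expDecay}, you offer only a heuristic (``together these should preclude a limit policy that continues past $\sigma$''), and the heuristic omits the one idea that makes the proof work. The paper's mechanism is a near-argmax selection: by \eqref{eq:expDecay} one picks $t\ge0$ and an atom $A\in\cA_{t}$ with $A\subseteq D_{t}$, $G_{t}(A)\ge0$ and
\[
  c^{t}G_{t}(A)\;\ge\;\frac{1}{c}\,\sup\bigl\{c^{s}G_{s}(A'):\ s\ge0,\ A'\in\cA_{s},\ A'\subseteq D_{s}\bigr\},
\]
which forces $G_{t}(A)\ge G_{s}(A')$ for \emph{every} later in-domain atom $A'$. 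At such a state, stopping weakly dominates any possible continuation value no matter what the future selves do, so with early stopping preference $\theta^{n}_{t}(A)=1$ for all $n\ge t$; therefore the limit policy also stops at $(t,A)$, and $P(\cL_{t_{0}}\theta^{*}\lhd\sigma)\ge P(A)>0$. This single argument --- not a case split on $T_{e}$ --- is how the paper obtains positivity of the limiting denominator, and it is the only place \eqref{eq:expDecay} enters. Your ingredients (the decay bound $G_{s}\le Mc^{-s}$ and the existence of a state with $G_{t}\ge0$) are the right raw material, but without the selection of a state whose payoff dominates all later in-domain payoffs uniformly over future policies, they do not prevent the conditioning probabilities from collapsing in the limit. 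Bear in mind, finally, that admissibility must be verified conditionally at every $(t_{0},A_{0})$, which makes a global, qualitative appeal to \eqref{eq:expDecay} all the less sufficient.
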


Let us comment on the assumptions before stating the proof.

\begin{remark}\label{rk:infiniteHorizonExistence}
  (a) Condition~\eqref{eq:expDecay} covers in particular problems with discounting for a payoff function with sub-exponential growth. Consider for instance the Markov chain setting of Example~\ref{ex:MarkovChain} with a bounded and nonnegative payoff function $g(t,x)$ and a discount factor $\delta\in(0,1)$. Then setting $G_{t}=\delta^{t}g(t,X_{t})$ for $t\in\T$ (and $G_{\infty}=0$), we see that~\eqref{eq:expDecay} is satisfied for any $c\in (1,\delta^{-1})$.

  (b) The proof of Theorem~\ref{th:infiniteHorizonExistence} below has three steps. The construction of a limiting stopping policy~$\theta$ and the verification of its optimality condition do not require~\eqref{eq:expDecay} at all. The latter is used to ensure that $\theta$ is \emph{admissible}. There are many other situations where admissibility holds, including without discounting, that can be established on a case-by-case basis, for instance the case of a Markov chain with a finite state space and a homogeneous reward $G_{t}=g(X_{t})$. Condition~\eqref{eq:expDecay} is merely one way to write a simple and fairly general result. 
Of course, $\sigma=\infty$ a.s.\ is always a sufficient condition for $P(\tau \lhd \sigma)\neq 0$, for any stopping time $\tau$.
  
  (c) Similarly, there are many cases where one can see directly from additional structure of $G$ that $\cL_{t}\theta<\infty$ a.s.\ for all $t\in\T$. In that case, $G_{\infty}$ is irrelevant. 
  
  (d) On the other hand, existence is not guaranteed without some assumption. For instance, if $T_{e}=\infty$ inside the domain but $P(\sigma<\infty)=1$ (cf.\ Example~\ref{ex:twoState} below with $p_{21}>0$), a strictly increasing reward $G$ leads to non-existence since stopping is undesirable for any agent but $\theta\equiv0$ is not admissible.
\end{remark}

\begin{proof}[Proof of Theorem~\ref{th:infiniteHorizonExistence}.]
  For $t<\infty$, let $\cA_{t}$ be the (countable) collection of atoms generating $\cF_{t}$.  Given $n\geq 1$, consider a modified problem with time horizon $n$ and let $(\theta^{n}_{t})_{0\leq t\leq n}$ be the equilibrium stopping policy obtained by applying Theorem~\ref{th:backwardAlgo} with the payoff $(G_{t})_{t\leq n}$. We also set $\theta^{n}_{t}\equiv1$ for $t\geq n$. Note that each $\theta^{n}_{t}$ is a binary sequence  $(\theta^{n}_{t}(A))_{A\in\cA_{t}}$. By a diagonal procedure we can thus find a subsequence (again denoted $\theta^{n}$) which converges to a stopping policy $\theta$ in the following sense: given $t<\infty$ and $A\in \cA_{t}$, we have $\theta^{n}_{t}(A)=\theta_{t}(A)$ for all sufficiently large $n$. If $T_{e}$ and $T_{e}^{n}$ denote the effective horizons, then $T_{e}\wedge n= T_{e}^{n}$ and thus the admissibility of $\theta^{n}$ for $n\geq 1$ implies that $\theta_{t}=1$ for $t\geq T_{e}$.
  
  To complete the proof that $\theta$ is admissible and an equilibrium, we fix  arbitrary $t_{0}\in\T$ and $A_{0}\in\cF_{t_{0}}$ and check the admissibility and optimality conditions at that state. For simplicity of notation, we assume that $t_{0}=0$ and $A_{0}=\Omega$ (the general case differs only by writing conditional expectations and probabilities). To further simplify the notation, we set $\tau=\cL_{0}\theta$ and $\tau^{n}=\cL_{0}\theta^{n}$. The convergence of $\theta^{n}$ to $\theta$ implies that $\tau^{n}\to \tau$ a.s. More precisely, this convergence is stationary on $\{\tau<\infty\}$,  yielding that 
  $1_{\{\tau^{n} < \sigma <\infty\}}\to 1_{\{\tau < \sigma <\infty\}}$ a.s. Moreover, $\{\tau \lhd \sigma\}=\{\tau < \sigma <\infty\} \cup \{\sigma=\infty\}$, where the union is disjoint, and similarly for $\tau^{n}$. It follows that
  \begin{equation}\label{eq:indicatorConv}
    1_{\{\tau^{n} \lhd \sigma\}}\to 1_{\{\tau \lhd \sigma\}}\quad\as
  \end{equation}

  \emph{Admissibility.} We must ensure that $P(\tau \lhd \sigma)\neq 0$. In view of~\eqref{eq:indicatorConv} it suffices to exhibit a reachable state where stopping happens for all large~$n$, as that will imply that $P(\tau \lhd \sigma)=\lim_{n} P(\tau^{n} \lhd \sigma)>0$.
  Indeed, by~\eqref{eq:expDecay} we can find $t\geq0$ and $A\in\cA_{t}$ with $A\subseteq D_{t}$ such that $G_{t}(A)\geq0$ and
  $$
    c^t G_t(A) \ge  \frac{1}{c} \sup_{s\ge 0, A'\in \mathcal{A}_s, A'\subseteq D_s} c^s G_s(A') \ge \sup_{s\ge t+1, A'\in \mathcal{A}_s, A'\subseteq D_s } c^{s-1} G_s(A')
  $$
  and hence
  $$
    G_{t}(A) \geq  G_{s}(A') \quad \mbox{for all} \quad s>t,\quad A'\in\cA_{s} \mbox{ with }A'\subseteq D_{s}.
  $$
  This shows that for the agent at $(t,A)$, stopping is optimal no matter what future selves do. In particular, $\theta^{n}_{t}(A)=1$ for all $n\geq t$ and thus $\tau \leq t<\sigma$ on $A$. As a result, $P(\tau \lhd \sigma)\geq P(A)>0$.
  
 \emph{Optimality.} It suffices to show that the continuation values converge at the fixed initial state; i.e.,   $J_{0}^{n}:=J_{0}(\theta^{n})\to J_{0}:=J_{0}(\theta)$. Once that is established, if $\theta_{0}=0$, then $\theta_{0}^{n}=0$ for $n$ large and hence $G_{0}\leq J_{0}^{n}\to J_{0}$ shows that $\theta_{0}=0$ is optimal, and similarly for $\theta_{0}=1$.  
  To see that
  \[
    J_{0}^{n}=
    \frac{E[G_{\tau^{n}} 1_{\{\tau^{n} \lhd \sigma\}}]}{P(\tau^{n} \lhd \sigma)}
    \to
    \frac{E[G_{\tau} 1_{\{\tau \lhd \sigma\}}]}{P(\tau \lhd \sigma)}
    =J_{0},
  \]
  note that the denominators are non-zero by admissibility and $P(\tau^{n} \lhd \sigma)\to P(\tau \lhd \sigma)$ by~\eqref{eq:indicatorConv}.
  In view of $\tau^{n}\to\tau$ a.s.\ we have $G_{\tau^{n}}\to G_{\tau}$ a.s.\ on $\{\tau<\infty\}$. As we have assumed that $G_{n}\to G_{\infty}$ a.s., this convergence holds everywhere. Using also the standing assumption that $E[\sup_{t\leq T} |G_t|1_{D_{t}}]<\infty$ and~\eqref{eq:indicatorConv}, the convergence of the numerators follows by dominated convergence.
\end{proof}

\begin{corollary}\label{co:infiniteHorizonMarkovianExistence}
  Consider the Markovian setting (Example~\ref{ex:MarkovChain}) under the conditions of Theorem~\ref{th:infiniteHorizonExistence}. Then there exists a Markovian equilibrium.
\end{corollary}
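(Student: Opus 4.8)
The plan is to re-run the argument of Theorem~\ref{th:infiniteHorizonExistence} and simply observe that, under the present hypotheses, it already delivers a Markovian equilibrium with essentially no additional work. Recall that in that proof the equilibrium $\theta$ is obtained as a diagonal-subsequence pointwise limit of the finite-horizon equilibria $\theta^{n}$, where $\theta^{n}$ is the policy produced by the backward recursion of Theorem~\ref{th:backwardAlgo} applied to the truncated payoff $(G_{t})_{t\leq n}$. The only two facts I need beyond that proof are that each approximant $\theta^{n}$ is Markovian and that Markovianity is preserved under the limiting procedure.

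The first fact is exactly Corollary~\ref{co:finiteHorizonMarkov}: we are in the Markovian setting of Example~\ref{ex:MarkovChain} with finite horizon $n$, so the early-stopping equilibrium $\theta^{n}$ from Theorem~\ref{th:backwardAlgo} is Markovian, i.e.\ $\theta^{n}_{t}$ is $\sigma(X_{t},1_{D_{t}})$-measurable for every $t\leq n$ (and $\theta^{n}_{t}\equiv1$ for $t\geq n$ is trivially so). For the second fact, recall that the limiting policy satisfies, for each fixed $t<\infty$ and each atom $A\in\cA_{t}$, the stationarity $\theta^{n}_{t}(A)=\theta_{t}(A)$ for all sufficiently large $n$; in particular $\theta_{t}=\lim_{n}\theta^{n}_{t}$ pointwise. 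Since a pointwise limit of $\sigma(X_{t},1_{D_{t}})$-measurable functions is again $\sigma(X_{t},1_{D_{t}})$-measurable, $\theta_{t}$ is $\sigma(X_{t},1_{D_{t}})$-measurable, so $\theta$ is Markovian.

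Concretely, if $\omega,\omega'$ satisfy $X_{t}(\omega)=X_{t}(\omega')$ and $1_{D_{t}}(\omega)=1_{D_{t}}(\omega')$, then $\theta^{n}_{t}(\omega)=\theta^{n}_{t}(\omega')$ for every $n$ by Markovianity of $\theta^{n}$, whence $\theta_{t}(\omega)=\theta_{t}(\omega')$ upon passing to the limit; this is precisely the required measurability. By Theorem~\ref{th:infiniteHorizonExistence} this $\theta$ is admissible and an equilibrium, hence a Markovian equilibrium. I do not expect any real obstacle here: all the analytic content—construction of the limit, admissibility via~\eqref{eq:expDecay}, and verification of the optimality condition—is already supplied by the proof of Theorem~\ref{th:infiniteHorizonExistence}, and the present corollary adds only the bookkeeping remark that the Markovian structure is closed under the pointwise (indeed stationary) limit used there.
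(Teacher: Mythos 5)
Your proposal is correct and follows essentially the same route as the paper's own proof: invoke Corollary~\ref{co:finiteHorizonMarkov} to get Markovianity of each finite-horizon equilibrium $\theta^{n}$, then observe that the pointwise (stationary) limit defining $\theta_{t}$ preserves $\sigma(X_{t},1_{D_{t}})$-measurability. The only difference is your added fiber-constancy remark, which is just an expanded justification of the same measurability step.
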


\begin{proof}
  We revisit the proof of Theorem~\ref{th:infiniteHorizonExistence}. Each of the finite-horizon problems is Markovian, so Corollary~\ref{co:finiteHorizonMarkov} shows that $\theta^{n}$ is Markovian. Since $\theta_{t}$ was constructed as a pointwise limit of $\theta^{n}_{t}$, it is again $\sigma(X_{t},1_{D_{t}})$-measurable.
\end{proof} 

We shall see in Example~\ref{ex:MinnieDonald} that this corollary cannot be improved in a time-homogeneous setting: the equilibria may nevertheless be time-dependent.

\section{Infinite-Horizon Equilibria: Examples}\label{se:infiniteHorizonExamples}

\subsection{Non-Uniqueness and Non-Markovian Equilibria}

The following example shows that in the infinite-horizon case, multiple equilibria may exist. In these equilibria, all agents' choices are uniquely determined; i.e., the non-uniqueness is not merely due to different choices of agents that are invariant between stopping and continuing. Moreover, the multiplicity arises even within the class of time-homogeneous Markov equilibria. The example also shows that non-Markovian equilibria may exist in a Markovian setting.

\begin{example}\label{ex:twoState}
   Consider a homogeneous Markov chain $X$ on the states $\{0,1,2\}$ with initial value $X_{0}=1$ and transition probabilities $(p_{ij})$ in its natural filtration. Only the states in $B=\{1,2\}$ are relevant for the agents, meaning that $\sigma=\inf\{t\geq0:\, X_{t}=0\}$ and $D_{t}=\{X_{1},\dots,X_{t}\in B\}$. The payoff $G_{t}=\delta^{t}g(X_{t})$ is given by a function $g$ of the current state and a discount factor $\delta\in(0,1)$. Specifically,
  \[
     p_{10}=p_{11}=p_{12}=1/3\quad \mbox{and}\quad g(0)=\Delta,\quad g(1)=1,\quad g(2)=a,
  \]
  where $a$ is a constant satisfying
  \[
    1<\frac{3-\delta}{2\delta}<a<\frac{2-\delta}{\delta}.
  \]
  We also assume that $p_{20}\neq1$; the other transition probabilities are arbitrary. Then, there are exactly two Markovian equilibria:
  \begin{enumerate}
    \item stop everywhere; i.e., $\theta\equiv 1$;
    \item stop if the chain is at State~2 or has exited; i.e., $\theta_{t}=1_{\{X_{t}=2\}\cup D_{t}^{c}}$.
  \end{enumerate} 
  If $p_{21}>0$, there are further, non-Markovian equilibria. %
  In these equilibria, the induced stopping time for a given agent at some state~$(t,\omega)$ coincides with the stopping time induced by~(i) or~(ii), conditionally on~$\cF_{t}$.
  \end{example} 
  
\begin{proof}
  We first note that as $a>g(1)$ and $\delta<1$, the only optimal choice for a time-$t$ agent on $\{X_{t}=2\}$ is to stop, no matter what future agents choose.
  
 (a) To see that $\theta\equiv1$ is an equilibrium, consider an agent at State~1, without loss of generality at $t=0$. Then 
  \begin{equation}\label{eq:twoStateCalc1}
    J_{0}(\theta)=\frac{\delta(p_{11}+a p_{12})}{1-p_{10}}=\frac{\delta(1/3+a/3)}{2/3}=\delta\frac{1+a}{2}<1=G_{0},
  \end{equation} 
  showing that stopping is indeed optimal and $\theta$ is an equilibrium.
  
  (b) The policy $\theta$ defined by $\theta_{t}=1_{\{X_{t}=2\}\cup D_{t}^{c}}$ is admissible. To see that it defines an equilibrium, consider again the time-$0$ agent at State~1. Let $\tau_{j}$ be the first hitting time of state $j$, so that $\sigma=\tau_{0}$ and $\tau:=\cL_{0}\theta=\tau_{0}\wedge\tau_{2}$. We have
$\{\tau\lhd\sigma\}=\{\tau_{2}<\tau_{0}\}$ a.s.\ since $P(\tau_{0}\wedge\tau_{2}=\infty)=0$. As $p_{10}=p_{12}$, the  symmetry between $\{\tau_{2}<\tau_{0}\}$ and $\{\tau_{0}<\tau_{2}\}$ yields that $P(\tau_{2}<\tau_{0})=P(\tau_{0}<\tau_{2})=1/2$ and thus $P(\tau\lhd\sigma)=1/2$. Moreover,
  \[
    E[\delta^{\tau}g(X_{\tau})1_{\tau\lhd\sigma}]
    =a\sum_{k\geq1} \delta^{k}P(\tau_{2}=k,\,k<\tau_{0})
    =a\sum_{k\geq1} \delta^{k}(1/3)^{k}=\frac{a\delta}{3-\delta}
  \]
  since $P(\tau_{2}=k,\,k<\tau_{0})=P(X_{1}=\dots=X_{k-1}=1,\,X_{k}=2)=p_{11}^{k-1}p_{12}$. It follows that
  \begin{equation}\label{eq:twoStateCalc2}
    J_{0}(\theta)=\frac{E[\delta^{\tau}G_{\tau}1_{\tau\lhd\sigma}]}{P(\tau\lhd\sigma)}=\frac{2a\delta}{3-\delta}>1=G_{0},
  \end{equation}
  showing that continuation is optimal. Thus $\theta$ is an equilibrium.
  
  (c) Let $\theta$ be a Markovian equilibrium; we show that $\theta$ must be one of the two above policies. We have already observed that any agent at State~2 must stop. The same holds for any agent at State~0, by admissibility. That is, $1_{\{X_{t}=2\}\cup D_{t}^{c}} \leq \theta_{t} \leq 1$ for all $t\in\T$.  If no other agent stops, $\theta$ is the policy of~(ii). Otherwise there exists a time-$t$ agent stopping at State~1: $\theta_{t}=1$ on $\{X_{t}= 1\}$. But then the same calculation as in~\eqref{eq:twoStateCalc1} shows that any agent at time $(t-1)$ and State~1 must also stop, etc., so that $\theta_{s}\equiv1$ for all $s\leq t$. As a result, the set of all agents at State~1 that stop can be thought of as a half-line starting at $t=0$. If this half-line is infinite, $\theta$ is the equilibrium from~(i). If not, there is some maximal $t<\infty$ where the time-$t$ agent stops, meaning that $\theta_{s}\equiv1$ for $s\leq t$ and $\theta_{s}=1_{\{X_{s}=2\}\cup D_{s}^{c}\}}$ for $s>t$. But now the calculation in~\eqref{eq:twoStateCalc2} shows that stopping is not optimal for any time-$t$ agent on $\{X_{t}=1\}$, a contradiction.
  
  (d) Next, we give an example of a non-Markovian equilibrium. Indeed, set $\theta_{0}=\theta_{1}\equiv 1$. For $t\geq 2$, we define
  \[
    \theta_{t}(\omega)=
    \begin{cases}
      0 & \mbox{if } \omega\in\{X_{1}=2,\,X_{t}=1\}\cap D_{t},\\
      1 & \mbox{else.}
    \end{cases} 
    \]
    Simple calculations analogous to~\eqref{eq:twoStateCalc1} and~\eqref{eq:twoStateCalc2} show that $\theta$ is an equilibrium. If $p_{21}>0$, both cases in the definition of $\theta$ happen with positive probability so that $\theta$ is indeed non-Markovian.
    
  (e) Let $\theta$ be any equilibrium, possibly non-Markovian. The first argument from (c) still shows that for $(t,\omega)$ such that $X_{t}(\omega)=1$ and $\theta_{t}(\omega)=1$, it follows that $\theta_{t-1}(\omega)=1$. However the second argument from~(c) merely shows that for $(t,\omega)$ such that $X_{t-1}(\omega)=X_{t}(\omega)=1$ and $\theta_{t-1}(\omega)=0$, it follows that $\theta_{t}=0$. (But this need not hold if $X_{t-1}(\omega)=2$, in contrast to the Markovian case where the policy cannot depend directly on $X_{t-1}$). This implies that given the past up to time $t$, the stopping time induced by $\theta$ is either immediate stopping as in~(i) or the first exit time of $\{1\}$ as in~(ii). Note that, as in~(d), the choice between these two may depend on $\omega$.
\end{proof} 
   
\begin{remark}\label{rk:twoStateConvergence}
  (a) The finite-horizon version of Example~\ref{ex:twoState} has a unique equilibrium, given by stopping everywhere. This follows by a backward recursion and the same calculation as in~\eqref{eq:twoStateCalc1}, since the time-$T$ agents have to stop. The limit of this equilibrium as $T\to\infty$ is the infinite-horizon equilibrium~(i). On the other hand, the equilibrium~(ii) does not arise as a limit of finite-horizon equilibria. 
  
  (b) In this particular example the two Markovian equilibria are ordered: equilibrium~(ii) has a larger value function for all agents. It is worth noting that the limit equilibrium is the inferior one.
  
  (c) Example~\ref{ex:MinnieDonald} shows that in general, no dominating equilibrium exists. One can also construct simple examples where the equilibrium value processes and stopping policies corresponding to different preferences are not ordered.
\end{remark}

\subsection{Non-Existence of Time-Homogeneous Equilibria}

In this section we construct an example of a time-homogeneous Markov chain which admits Markovian equilibria but no time-homogeneous equilibria. In that sense, Theorem~\ref{th:infiniteHorizonExistence} and Corollary~\ref{co:infiniteHorizonMarkovianExistence} cannot be improved, and a restriction  to time-homogeneous notions is not possible (or will lead to non-existence). Importantly, the example also shows that the remarkable iterative approach of~\cite{HuangZhou.17} does not apply in our setting. Indeed, in the problem of non-exponential discounting with decreasing impatience, an iterated application of $\Phi$ (from a suitable starting point) produces a monotone sequence which converges to a time-homogeneous equilibrium. In our case however, the iteration can fail to be monotone. This can be related to a failure of both inequalities of the dynamic programming principle, whereas decreasing impatience preserves one.

\begin{example}\label{ex:MinnieDonald}
Consider the homogeneous Markov chain $X$ on $\{0,1,2,3,4\}$ with transition probabilities as labeled next to the edges in Figure~\ref{fig:MinnieDonald}. In particular, States 0, 3 and 4 are absorbing. We set $B=\{1,2,3,4\}$ so that 0 is the only exit state. The payoff process is given by $G_t=\delta^t g(X_t)$ where $\delta\in (0,1)$ is the discount factor and $g(1)=a$, $g(2)=2$, $g(3)=0$, $g(4)=b$ as labeled in the boxes in Figure~\ref{fig:MinnieDonald}. To avoid trivialities, we assume that the initial position is one of the non-absorbing states, i.e., either $X_{0}=1$ or $X_{0}=2$, and we also restrict our attention to equilibria that stop at State~3.\footnote{Since State~3 is absorbing and $g(3)=0$, all policies have zero reward for an agent at State~3 who is therefore invariant. This leads to an infinity of (uninteresting) equilibria. If early stopping preference is assumed, stopping at State~3 is a consequence rather than a condition.} A Markovian equilibrium $\theta_{t}=f(t,X_{t})$ is called time-homogeneous if $f$ does not depend on $t$.

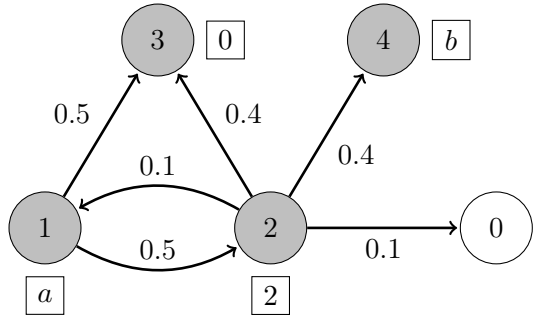
\begin{figure}[h]
\vspace{6pt}
\begin{center}
\begin{tikzpicture}
lightgray = lighten(gray, 0.5)

\node[state,fill=lightgray] at (0,0) (1){1};
\node[state,fill=lightgray] at (3,0) (2){2};
\node[state,fill=lightgray] at (1.5,2.5) (3){3};
\node[state,fill=lightgray] at (4.5,2.5) (4){4};
\node[state] at (6,0) (0){0};

\draw[every loop, line width=1pt]
(1) edge[bend right, auto=left] node {0.5} (2)
(2) edge[bend right, auto=right] node {0.1} (1)
(1) edge[auto=left] node {0.5} (3)
(2) edge[auto=right] node {0.4} (3)
(2) edge[auto=right] node {0.4} (4)
(2) edge[auto=right] node {0.1} (0);

\node at (0,-0.9) [rectangle, draw, minimum size=0.5cm] {$a$};
\node at (3,-0.9) [rectangle, draw, minimum size=0.5cm] {$2$};
\node at (2.4,2.5) [rectangle, draw, minimum size=0.5cm] {$0$};
\node at (5.4,2.5) [rectangle, draw, minimum size=0.5cm] {$b$};
\end{tikzpicture}
\end{center}
\vspace{-7pt}
\caption{The Markov chain of Example~\ref{ex:MinnieDonald}, with states $x$ labeled in circles and payoffs $g(x)$ in boxes.}
\label{fig:MinnieDonald}
\end{figure}

We fix $0<a<\delta<1<2<b$ such that the following inequalities are satisfied:
\begin{equation}\label{cond1}
a<\delta, \quad \delta(a+4b)<18,
\end{equation}
\begin{equation}\label{cond2}
\delta(\delta+4b)>18, \quad 0.01 \delta^3 \min(5a, b\delta^2)+0.2b\delta^3+4b\delta>17.9,
\end{equation}
\begin{equation}\label{cond3}
\delta^2(\max(\delta,0.25b\delta^2)+4b)<18.9 a.
\end{equation}
One possible choice is $\delta=0.999$, $a=0.96$, $b=4.257$. Then, up to a.s.\ equivalence:

\begin{enumerate}
\item All equilibria are Markovian.

\item
There exists no time-homogeneous equilibrium.

\item
There are exactly two equilibria  %
and they are given by shifts of one another. Indeed, let
\[
  \theta^1_t=f(t,X_t), \ \theta^2_t=f(t+1,X_t), \ \theta^3_t=f(t+2,X_t), \ \theta^4_t=f(t+3,X_t)
\]
where
\[f(t,x)=\begin{cases}
1_{R_4}(x), & t\equiv 0 \mod 4\\
1_{R_3}(x), & t\equiv 1 \mod 4\\
1_{R_2}(x), & t\equiv 2 \mod 4\\
1_{R_1}(x), & t\equiv 3 \mod 4\\
\end{cases}\]
for
\[
  R_1=\{0,1,2,3,4\}, \ R_2=\{0,2,3,4\}, \ R_3=\{0,3,4\}, \ R_4=\{0,1,3,4\}.
\]
Then $\theta^i$, $i=1,\dots,4$ are Markovian equilibria, and exactly two of them are distinct up to a.s.\ equivalence: if $X_{0}=1$, then $\theta^{1}=\theta^{4}$ and $\theta^{2}=\theta^{3}$, whereas if $X_{0}=2$, then $\theta^{1}=\theta^{2}$ and $\theta^{3}=\theta^{4}$, a.s.\,\footnote{Recall that the initial condition is deterministic in our basic setup. If $X_{0}=1$, then State~1 can only be visited at odd $t$ and State~2 only at even $t$; the reverse is true if $X_{0}=2$. This leads to the a.s.\ equivalence of two pairs of $\theta^{i}$. Whereas if we treated the initial state as not being fixed (as may be considered natural in a Markovian framework) or if we assumed that $X_{0}$ has a distribution with support including both states, then all four equilibria would be distinct.}

\end{enumerate}

That all equilibria are Markovian is related to the filtration being relatively small (a.s.) due to various states being absorbing---this fact should not be given too much weight. The proofs for the other items are rather lengthy, so let us try to summarize the key mechanics heuristically. First, the dynamics are engineered such that in any equilibrium, the decision of a time-$t$ agent depends only on the agents at $t+1$. Moreover, as highlighted in Lemma~\ref{le:MinnieDonald} below, it embeds two types of agents that cannot agree (and cannot even agree to disagree): Call Minnie$_{t}$ the agent at State~2 and time~$t$ and Donald$_{t}$ the agent at State~1 and time $t$. Minnie prefers to live in harmony and always wants to agree, whereas Donald is only happy if he contradicts Minnie. Suppose that at some time $t$, Donald$_{t}$ says ``1'' (stop). Then Minnie$_{t-1}$ also opts for~1, but the combative Donald$_{t-2}$ immediately replies with 0, thus implying time-inhomogeneity as he is contradicting Donald$_{t}$. The situation is similar if Donald$_{t}$ starts with~$0$. 

Conversely, there are exactly two equilibria because the above backward recursion also implies a unique forward recursion once the initial Donald$_{0}$ (or Minnie$_{0}$, depending on what the initial state $X_{0}$ is) fixes one of the two possible choices 0 or 1.
\end{example}

\begin{proof}[Proof of (i)--(iii).]
Let us first observe that any equilibrium stopping policy~$\theta$ (possibly non-Markovian) must stop on $\{X_{t}=0\}$, by admissibility. Furthermore, it must stop on $\{X_{t}=4\}$: State~4 is absorbing and $g(4)>0$, so that continuing is never optimal due to the discount factor $\delta<1$. Since we have also convened that $\theta$ stops on $\{X_{t}=3\}$, we may henceforth restrict our attention to equilibria satisfying $\theta_t=1$ on $\{X_t\in\{0,3,4\}\}$ for all $t\in\T$.

(i) Let $\theta$ be any equilibrium; we show that $\theta$ is Markovian (or rather, a.s.\ equivalent to a Markovian equilibrium). Indeed, suppose first that the initial condition is $X_{0}=1$, and fix $t\in\T$. We have that $\theta_{t}=1$ on $\{X_{t}\in\{0,3,4\}\}$. But since $0,3,4$ are absorbing states, $\{X_{t}\in\{0,3,4\}\}=\cup_{s\leq t}\{X_{s}\in\{0,3,4\}\}$.  Suppose that $t\in\T$ is odd. Then $\{X_{t}=1\}$ is a nullset, so that up to a.s.\ equivalence, only the value of $\theta_{t}$ on $\{X_{t}=2\}$ has not been determined yet. But due to the absorption on $\{0,3,4\}$ and the fact that exactly one of the sets  $\{X_{s}=1\}$ and $\{X_{s}=2\}$ has positive probability for every $s\leq t$, we have $\{X_{t}=2\}=\{X_{1}=2,\, X_{2}=1,\, X_{3}=2,\, \dots, X_{t}=2\}$ which implies that $\{X_{t}=2\}$ is an atom in $\cF_{t}$. In particular, $\theta_{t}$ is a.s.\ constant on $\{X_{t}=2\}$, and since $\theta_{t}=1$ a.s.\ on $\{X_{t}=2\}^{c}$, it follows that $\theta_{t}$ is of Markovian form. The situation is analogous if $t$ is even, and hence $\theta$ is Markovian. The initial condition is $X_{0}=2$ is dealt with similarly.

The proof of (ii) and (iii) necessitates the following lemma which describes the Minnie--Donald relationship sketched above.

\begin{lemma}\label{le:MinnieDonald}
  Let $0<a<\delta<1<2<b$ satisfy \eqref{cond1}--\eqref{cond3} and let $\theta$ be an admissible stopping policy
  such that $\theta_t=1$ on $\{X_t\in\{0,3,4\}\}$ for $t\in\T$. Then for all $t\geq1$,
  \begin{itemize}
  \item[{\normalfont (P1)}] %
  if $\theta_{t}=1$ on $\{X_t=1\}$, then $\Phi(\theta)_{t-1}=1$ on $\{X_{t-1}=2\}$;
  \item[{\normalfont (P2)}] %
  if $\theta_{t}=0$ on $\{X_t=1\}$, then $\Phi(\theta)_{t-1}=0$ on $\{X_{t-1}=2\}$;
  \item[{\normalfont (P3)}] %
  if $\theta_{t}=1$ on $\{X_t=2\}$, then $\Phi(\theta)_{t-1}=0$ on $\{X_{t-1}=1\}$;
  \item[{\normalfont (P4)}] %
  if $\theta_{t}=0$ on $\{X_t=2\}$, then $\Phi(\theta)_{t-1}=1$ on $\{X_{t-1}=1\}$.
  \end{itemize}
\end{lemma}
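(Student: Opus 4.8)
The plan is to prove each of (P1)--(P4) by computing the continuation value $J_{t-1}(\theta)$ of the relevant time-$(t-1)$ agent and comparing it with the stopping value $G_{t-1}=\delta^{t-1}g(X_{t-1})$. By the definition of $\Phi$, on $\{X_{t-1}=2\}$ (for (P1),(P2)) and on $\{X_{t-1}=1\}$ (for (P3),(P4)) one has $\Phi(\theta)_{t-1}=1$ iff $G_{t-1}>J_{t-1}(\theta)$ and $\Phi(\theta)_{t-1}=0$ iff $G_{t-1}<J_{t-1}(\theta)$, the prescribed constants being chosen so that the comparison is always strict (no agent is ever invariant, so $\theta_{t-1}$ never enters). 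Since continuing from either non-absorbing state keeps positive mass on the in-domain absorbing State~$3$, every such agent lies strictly before $T_{e}$ and all denominators are positive, so $J_{t-1}(\theta)$ is well defined. The common first step is to expand $J_{t-1}(\theta)$ via the Markov property by conditioning on the next state $X_t$: a transition to State~$3$ contributes $\delta^{t}g(3)=0$ to the numerator and $1$ to the survival denominator, State~$4$ contributes $\delta^{t}b$ and $1$, State~$0$ is an exit and contributes $0$ to both, while States~$1,2$ either stop at time $t$ (if $\theta_t=1$ there) or hand over to the continuation value of the corresponding future agent.

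For (P1) and (P3) this expansion terminates immediately. In (P1) the hypothesis $\theta_t=1$ on $\{X_t=1\}$ makes all four branches out of State~$2$ resolve at time $t$, giving $J_{t-1}(\theta)=\delta^{t}(0.4b+0.1a)/0.9$ on $\{X_{t-1}=2\}$; the inequality $G_{t-1}=2\delta^{t-1}>J_{t-1}(\theta)$ is then exactly $\delta(a+4b)<18$ from \eqref{cond1}. In (P3) the hypothesis $\theta_t=1$ on $\{X_t=2\}$ makes both branches out of State~$1$ resolve at time $t$, giving $J_{t-1}(\theta)=\delta^{t}$ on $\{X_{t-1}=1\}$; the inequality $G_{t-1}=a\delta^{t-1}<J_{t-1}(\theta)$ is exactly $a<\delta$ from \eqref{cond1}.

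Properties (P2) and (P4) are the substantive ones, because now the relevant continuation passes through a state (State~$1$ in (P2), State~$2$ in (P4)) that is allowed to continue, so $J_{t-1}(\theta)$ depends on the value/survival pair produced downstream by an arbitrary admissible $\theta$. I would first record the universal bounds coming from the naive optimal-stopping recursion $u_1=\max(a,\tfrac12\delta u_2)$ and $u_2=\max(2,\,0.1\delta u_1+0.4\delta b)$, which under the standing inequalities resolves to $u_1=\delta$ and $u_2=2$; any continuation value from State~$i$ at time $s$ is then at most $u_i\delta^{s}$, while the survival probability is always $\ge0.8$ from State~$2$ (forced mass $0.4+0.4$ onto States~$3,4$) and $\ge0.5$ from State~$1$ (mass onto State~$3$). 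One clean sub-case of (P2) is when the State-$1$ agent realises its maximal normalised value $\delta$: the computation then mirrors (P1) with $a$ replaced by $\delta$, and the required inequality is precisely the first part $\delta(\delta+4b)>18$ of \eqref{cond2}.

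The main obstacle is that these independent extremal bounds are too lossy for the deliberately tight constants: in (P2), for example, pairing the minimal one-step survival with the one-step value lower bound misses the threshold $18$ by a hair. The remedy is to track value and survival jointly through the coupling between States~$1$ and~$2$ and to unroll the recursion two or three steps, so that the decisive degree of freedom becomes the next State-$2$ agent's binary choice: stopping contributes the payoff $2$ (an $O(\delta)$ term), whereas continuing routes the dominant mass to the State-$4$ payoff $b$ through the transition probabilities (an $O(b\delta^{2})$ term, which after normalising by the survival $0.8$ carries the coefficient $0.25$). Taking the worse of the two alternatives is exactly what produces $\min(5a,b\delta^{2})$ in \eqref{cond2} and $\max(\delta,0.25b\delta^{2})$ in \eqref{cond3}, while the powers $\delta^{3}$ and the coefficients $0.01,0.2$ come from the products of transition probabilities along the corresponding two- and three-step paths. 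With the finitely many extremal value/survival pairs in hand, (P2) reduces to the second inequality of \eqref{cond2} and (P4) to \eqref{cond3}, completing all four cases.
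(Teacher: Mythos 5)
Your computations for (P1) and (P3) are correct and coincide with the paper's: in both cases the hypothesis forces $\cL_{t-1}\theta=t$, the continuation value is an explicit one-step quantity, and the comparison is exactly the two parts of \eqref{cond1}. The trouble is in (P2)/(P4), which you rightly identify as the substantive cases. The ``universal bounds'' you record first are false: the recursion $u_1=\max(a,\tfrac12\delta u_2)$, $u_2=\max(2,\,0.1\delta u_1+0.4\delta b)$ is the \emph{unconditioned} dynamic-programming recursion (it implicitly assigns payoff $0$ to exiting at State~$0$), whereas continuation values in this problem are ratios renormalized by the survival probability. From State~$2$ this renormalization divides by $0.9$ and inflates values above your $u_2$: if the State-$1$ agent at time $t$ continues and the State-$2$ agent at time $t+1$ stops, then the State-$2$ agent at time $t-1$ has normalized value $(0.1\delta^2+0.4b\delta)/0.9=\delta(\delta+4b)/9>2$ by the first part of \eqref{cond2} --- this is exactly your own ``clean sub-case,'' and it contradicts the claim that any continuation value from State~$2$ is at most $2\delta^s$. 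The error is not a repairable technicality: if that bound were true, no State-$2$ agent could ever strictly prefer continuing, so (P2) itself (which asserts $\Phi(\theta)_{t-1}=0$, i.e.\ $J_{t-1}(\theta)>G_{t-1}=2\delta^{t-1}$) could never hold. The failure of such Snell-type one-sided bounds is precisely the feature of conditional stopping that this example is engineered to exhibit.

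Beyond this false step, your treatment of (P2) and (P4) remains an outline. The remedy you propose --- track numerator and denominator jointly and unroll the recursion a few steps, with the downstream State-$2$ agent's binary choice as the free parameter --- is indeed the paper's strategy, but the decisive quantitative work is absent, and it must be carried out \emph{uniformly over all admissible $\theta$}, not just over a handful of ``extremal value/survival pairs'': the policy beyond time $t+1$ is arbitrary, so what one actually needs are one-sided bounds valid for every continuation. Concretely, the paper derives recursions for the numerator $h_t(\theta)$ and denominator $p_t(\theta)$ of the normalized value, extracts from them the inequalities needed (for (P2): a lower bound on $h_{t-1}$ and an upper bound on $p_{t-1}$; for (P4) the reverse), introduces $\gamma_{t-1}=P(X_t=1,X_{t+1}=2,\theta_{t+1}=1\,|\,\cF_{t-1})\in[0,0.05]$, shows both bounds are affine in $\gamma_{t-1}$, and checks positivity of the resulting affine function at the endpoints $\gamma_{t-1}\in\{0,0.05\}$; those two endpoint inequalities are exactly the two parts of \eqref{cond2}, and an analogous (simpler) chain of bounds gives (P4) from \eqref{cond3}. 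Observing that the coefficients $0.01$, $0.2$ and the terms $\min(5a,b\delta^2)$, $\max(\delta,0.25b\delta^2)$ ``come from'' certain paths reverse-engineers the constants in the conditions but does not establish the inequalities; as it stands, (P2) and (P4) are not proved.
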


The proof of the lemma is reported after the proof of~(ii) and~(iii).

(ii) Define the 4-periodic sequence $(R_n)$ by $R_n=R_{n+4\mathbb{Z}}$ where $R_{1},\dots,R_{4}$ are as in (iii) above. Note that $R_{1},\dots,R_{4}$ exhaust all combinations of $\{0,3,4\}$ and the remaining states. Thus, a time-homogeneous equilibrium $\theta$ must (a.s.) be of the form $\theta_{t}=1_{R_{n}}(X_{t})$, $t\in\T$, for some $n$.  On the other hand, for any $t\in\T$, (P1)--(P4) imply that $\Phi(\Phi(1_{R_n}(X_{t})))=1_{R_{n+2}}(X_{t})\neq 1_{R_n}(X_{t})$, thus ruling out the existence of a time-homogeneous equilibrium. (We iterate $\Phi$ twice to ensure that the policies differ also modulo a.s.\ equivalence).

(iii) Admissibility of $\theta^i$ ($i=1,2,3,4$) holds since $D_{t}=\{X_{t}=0\}^{c}$ a.s.\ (due to~$\{0\}$ being absorbing) and since from any non-absorbing state there is a positive probability of reaching $\{3,4\}$ before reaching $\{0\}$. Moreover, $\Phi(\theta^i)=\theta^i$ follows by direct verification using (P1)--(P4). Hence, $\theta^i$ are equilibria.

To see that there are exactly two equilibria, suppose first that the initial condition is $X_{0}=1$ and let $\theta$ be a (necessarily Markovian) equilibrium. Modulo a.s.\ equivalence, $\theta$ is completely determined by its values on $\{X_{0}=1\}$, $\{X_{1}=2\}$, $\{X_{3}=1\}$, etc., since State~1 can only be visited at even times and State~2 only at odd times. Next, we use (P1)--(P4): Suppose that $\theta_{0}=1$ on $\{X_{0}=1\}$. This implies $\theta_{1}=0$ on $\{X_{1}=2\}$, which implies $\theta_{2}=0$ on $\{X_{2}=1\}$, which implies $\theta_{3}=1$ on $\{X_{3}=2\}$, etc. Therefore, we have $\theta=\theta^{1}=\theta^{4}$ a.s.

Alternately, $\theta_{0}=0$ on $\{X_{0}=1\}$. This implies $\theta_{1}=1$ on $\{X_{1}=2\}$, thus $\theta_{2}=1$ on $\{X_{2}=1\}$, thus $\theta_{3}=0$ on $\{X_{3}=2\}$, etc. In particular, we have $\theta=\theta^{2}=\theta^{3}$ a.s. 

The case of the initial condition $X_{0}=2$ is similar.\footnote{If the initial state is not considered fixed or if it is random with $P(X_{0}=1)>0$ and $P(X_{0}=2)>0$, then (P1)--(P4) imply that $\theta$ is a.s.\ equal to exactly one of the four $\theta^{i}$, uniquely determined by the values of $\theta_{0}$ on $\{X_{0}=1\}$ and $\{X_{0}=2\}$.}%

\end{proof}

\begin{proof}[Proof of Lemma~\ref{le:MinnieDonald}.]
Let $t\geq1$ and set
\[
  \tilde J_t(\theta)=\frac{E[\delta^{\cL_{t}\theta-t}g(X_{\cL_{t}\theta}) 1_{\{\cL_{t}\theta<\sigma\}}|\cF_t]}{P(\cL_{t}\theta<\sigma|\cF_t)}
\]
so that $J_{t}(\theta)= \delta^t \tilde J_t(\theta)$ is the continuation value at time $t$. Note that comparing $J_t(\theta)$ with $G_t$ is equivalent to comparing $\tilde J_t(\theta)$ with $g(X_t)$.
We first show (P1) and (P3).\\[-.8em]

(P1): Suppose that $X_{t-1}=2$. This implies $X_t\in\{0,1,3,4\}$ %
and thus the assumption of~(P1) yields that $\theta_t=1$, $\cL_{t-1}\theta=t$ and 
$\tilde J_{t-1}(\theta)=\delta(0.1a+0.4 b)/0.9$. By the second part of~\eqref{cond1}, we have $\tilde J_{t-1}(\theta)<2=g(2)$ and thus $\Phi(\theta)_{t-1}=1$ as claimed.\\[-.8em]

(P3): If $X_{t-1}=1$, then $X_t\in  \{2,3\}$ and the assumption of~(P3) imply $\theta_t=1$, $\cL_{t-1}\theta=t$ and 
$\tilde J_{t-1}(\theta)=\delta$. By the first part of~\eqref{cond1}, we have $\tilde J_{t-1}(\theta)>a=g(1)$ and thus $\Phi(\theta)_{t-1}=0$.\\[-.8em]

Next, we analyze (P2) and (P4). Denote by $h_t(\theta)$ and $p_t(\theta)$ the numerator and denominator of $\tilde J_t(\theta)$. It is clear that $h_{t}(\theta)\le \tilde J_{t}(\theta)\le b$ for all $t$, since $b$ is the maximum possible payoff. 
By iterated conditioning, we have that on the set $\{X_t=1\}\subseteq\{X_{t+1}\in\{2,3\}\}$,
\begin{align}
h_t(\theta)&=E[\delta^{\cL_{t}\theta-t}g(X_{\cL_{t}\theta})1_{\{\cL_{t}\theta<\sigma\}}|\cF_t] \notag \\
&= E[1_{\{X_{t+1}=2\}} \delta^{\cL_{t}\theta-t}g(X_{\cL_{t}\theta})1_{\{\cL_{t}\theta<\sigma\}}+1_{\{X_{t+1}=3\}} \delta g(3)|\cF_t] \notag \\
&= E[1_{\{X_{t+1}=2, \theta_{t+1}=1\}} \delta g(2)|\cF_t] \notag \\
& \quad + E[1_{\{X_{t+1}=2, \theta_{t+1}=0\}} \delta^{\cL_{t+1}\theta-(t+1)}\delta g(X_{\cL_{t+1}\theta})1_{\{\cL_{t+1}\theta<\sigma\}}|\cF_t] \notag \\
&=  \delta E[1_{\{X_{t+1}=2, \theta_{t+1}=1\}} 2 + 1_{\{X_{t+1}=2, \theta_{t+1}=0\}} h_{t+1}(\theta) |\cF_t], \label{eq:h1}
\end{align}
where we have used that $\cL_{t}\theta=t+1$ if $\theta_{t+1}=1$ and $\cL_{t}\theta=\cL_{t+1}\theta$ if $\theta_{t+1}=0$. Similarly, we deduce that on
$\{X_t=1\}$,
\begin{equation}\label{eq:p1}
p_t(\theta)= E[1_{\{X_{t+1}=2, \theta_{t+1}=1\}} + 1_{\{X_{t+1}=2, \theta_{t+1}=0\}} p_{t+1}(\theta) |\cF_t]+0.5,
\end{equation}
and on $\{X_t=2\}$,
\begin{align}
h_t(\theta)&=\delta E[1_{\{X_{t+1}=1, \theta_{t+1}=1\}} a + 1_{\{X_{t+1}=1, \theta_{t+1}=0\}} h_{t+1}(\theta) |\cF_t]+0.4b\delta, \label{eq:h2}\\
p_t(\theta)&=E[1_{\{X_{t+1}=1, \theta_{t+1}=1\}} + 1_{\{X_{t+1}=1, \theta_{t+1}=0\}} p_{t+1}(\theta) |\cF_t]+0.8. \label{eq:p2}
\end{align}
Equations \eqref{eq:h1}-\eqref{eq:p2} yield the following bounds: on $\{X_t=1\}$,
\begin{align}
h_t(\theta)&\le \delta E[1_{\{X_{t+1}=2\}}\max(2,h_{t+1}(\theta))|\cF_t], \label{leq:h1}\\
h_t(\theta)&\ge \delta E[1_{\{X_{t+1}=2\}}\min(2,h_{t+1}(\theta))|\cF_t],  \label{geq:h1} \\
p_t(\theta)&\ge 0.5+E[1_{\{X_{t+1}=2\}}p_{t+1}(\theta)|\cF_t],  \label{geq:p1}
\end{align}
and on $\{X_t=2\}$,
\begin{align}
&h_t(\theta) \le \delta E[1_{\{X_{t+1}=1\}}\max(a,h_{t+1}(\theta))|\cF_t]+0.4b\delta,  \label{leq:h2} \\
&h_t(\theta) \ge \delta E[1_{\{X_{t+1}=1\}}\min(a,h_{t+1}(\theta))|\cF_t] +0.4b\delta,  \label{geq:h2} \\
&0.8+E[1_{\{X_{t+1}=1\}} p_{t+1}(\theta) |\cF_t] \le p_t(\theta) \le 0.9.  \label{ieq:p2}
\end{align}

(P2): Suppose that $\theta_t=0$ on $\{X_t=1\}$. Throughout the proof of~(P2), we assume that we are on the set $\{X_{t-1}=2\}$; i.e., all statements are conditional on $X_{t-1}=2$. Then, $1_{\{X_t=1,\theta_t=0\}}=1_{\{X_t=1\}}$ and $1_{\{X_t=1,\theta_t=1\}}=0$. To establish that $\Phi(\theta)_{t-1}=0$, it suffices to show that $\tilde J_{t-1}(\theta)>g(2)=2$. To that end, we derive a lower bound for $h_{t-1}(\theta)$ and an upper bound for $p_{t-1}(\theta)$ (conditionally on $X_{t-1}=2$). Let 
\[\gamma_{t-1}:=P(X_t=1, X_{t+1}=2, \theta_{t+1}=1|\cF_{t-1})\]
and note that $0\le \gamma_{t-1} \le 0.05$. 
Starting from the fact that $h_{t+3}(\theta)\ge 0.4b\delta$ on $\{X_{t+3}=2\}$ by~\eqref{geq:h2}, we use \eqref{geq:h1} to see that on $\{X_{t+2}=1\}$,
\begin{align*}
h_{t+2}(\theta)& \ge \delta E[1_{\{X_{t+3}=2\}}\min(2,h_{t+3}(\theta))|\cF_{t+2}] \\
& \ge \delta E[1_{\{X_{t+3}=2\}}\min(2,0.4b\delta)|\cF_{t+2}]\\
&=0.5 \delta \min(2,0.4b\delta)=  \min(\delta,0.2b\delta^2),
\end{align*}
and then \eqref{geq:h2} and $a<\delta$ to deduce that on $\{X_{t+1}=2\}$,
\begin{align}
h_{t+1}(\theta)&\ge \delta E[1_{\{X_{t+2}=1\}}\min(a,h_{t+2}(\theta))|\cF_{t+1}] +0.4b\delta \nonumber\\
& \ge \delta E[1_{\{X_{t+2}=1\}}\min(a,\min(\delta,0.2b\delta^2))|\cF_{t+1}]+0.4b\delta \nonumber\\
&=0.1\delta \min(a,0.2b\delta^2)+0.4b\delta=0.1 A, \label{eq:h-lower}
\end{align}
where 
\[A:= \delta\min(a,0.2b\delta^2)+4b\delta. \]
By \eqref{eq:h2}, the assumption of (P2), \eqref{eq:h1}, and iterated conditioning, we have
\begin{align*}
&h_{t-1}(\theta)\\
&= \delta E[ 1_{\{X_{t}=1\}} h_{t}(\theta) |\cF_{t-1}]+0.4b\delta\\
& =\delta^2 E[ 1_{\{X_t=1, X_{t+1}=2, \theta_{t+1}=1\}} 2 + 1_{\{X_t=1, X_{t+1}=2, \theta_{t+1}=0\}} h_{t+1}(\theta) |\cF_{t-1}]+0.4b\delta.
\end{align*}
Substituting the lower bound~\eqref{eq:h-lower} for $h_{t+1}(\theta)$ into the above equation, 
\begin{equation*}
h_{t-1}(\theta)\ge  \delta^2(2\gamma_{t-1}+0.1A(0.05-\gamma_{t-1}))+ 0.4b\delta.
\end{equation*}
Similarly, using \eqref{eq:p2}, the assumption of (P2), \eqref{eq:p1}, iterated conditioning and \eqref{ieq:p2}, we obtain that
\begin{align*}
p_{t-1}(\theta)&=E[1_{\{X_{t}=1\}} p_{t}(\theta) |\cF_{t-1}]+0.8 \notag\\
&=E[1_{\{X_t=1, X_{t+1}=2, \theta_{t+1}=1\}} + 1_{\{X_t=1, X_{t+1}=2, \theta_{t+1}=0\}} p_{t+1}(\theta)|\cF_{t-1}] \notag\\
&\quad +0.5 P(X_t=1 |\cF_{t-1})+0.8 \notag\\
&\le E[1_{\{X_t=1, X_{t+1}=2, \theta_{t+1}=1\}} + 1_{\{X_t=1, X_{t+1}=2, \theta_{t+1}=0\}} 0.9 |\cF_{t-1}]+0.85 \notag \\
&= \gamma_{t-1}+0.9(0.05-\gamma_{t-1})+0.85=0.1\gamma_{t-1}+0.895.
\end{align*}
These two bounds yield that
\[\tilde J_{t-1}(\theta)=\frac{h_{t-1}(\theta)}{p_{t-1}(\theta)}\ge \frac{\delta^2(2\gamma_{t-1}+0.1A(0.05-\gamma_{t-1}))+ 0.4b\delta}{0.1\gamma_{t-1}+0.895}.\]
As a consequence, a sufficient condition for $\tilde J_{t-1}(\theta)>2$ is that
\[
  f(y):=\delta^2(2y+0.1A(0.05-y))+ 0.4b\delta-(0.2 y+1.79)>0 \quad\mbox{for all }y\in[0,0.05].
\]
Since $f$ is linear in $y$, this is equivalent to $f(0)>0$ and $f(0.05)>0$, which is precisely~\eqref{cond2}.\\[-.8em]

(P4): Suppose that $\theta_t=0$ on $\{X_t=2\}$, so that $1_{\{X_t=2,\theta_t=0\}}=1_{\{X_t=2\}}$ and $1_{\{X_t=2,\theta_t=1\}}=0$. We assume throughout the proof of (P4) that we are on the set $\{X_{t-1}=1\}$, and we shall establish that $\Phi(\theta)_{t-1}=1$ by showing the inequality $\tilde J_{t-1}(\theta)<g(1)$.

Proceeding similarly as in the proof of (P2), we start from the fact that $h_{t+3}(\theta)\le b$ and $p_{t+3}(\theta)\ge 0$ and apply \eqref{leq:h1}, \eqref{leq:h2}, \eqref{geq:p1} and \eqref{ieq:p2} repeatedly to derive the following bounds on $\{X_t=2\}$:
\[h_{t}(\theta) \le 0.1 \delta \max(\delta, 0.25 b\delta^2)+0.4b\delta, \quad\quad p_t(\theta)\ge 0.89.\]
Then, we use \eqref{eq:h1}, \eqref{eq:p1}, the assumption of (P4) and \eqref{cond3}  to deduce that
\begin{align*}
\tilde J_{t-1}(\theta)&=\frac{h_{t-1}(\theta)}{p_{t-1}(\theta)}=\frac{\delta E[1_{\{X_t=2\}} h_t(\theta)|\cF_{t-1}]}{E[1_{\{X_t=2\}} p_t(\theta) |\cF_{t-1}]+0.5}\\
&\le \frac{0.5\delta \{0.1 \delta \max(\delta, 0.25 b\delta^2)+0.4b\delta\}}{0.5\cdot 0.89+0.5}\\
&= \frac{\delta^2 \{ \max(\delta, 0.25 b\delta^2)+4b\}}{18.9}<a=g(1).
\end{align*}
The proof is complete.
\end{proof}

\begin{remark}\label{rk:noDiscountingMinnieDonald}
  The results in Example~\ref{ex:MinnieDonald} extend to the undiscounted case $\delta=1$ if we focus on equilibria that stop in the absorbing State~4 (or focus on equilibria with early stopping preference). The situation is the same as for State~3: without discounting, any agent at State~4 is invariant between stopping and continuing which leads to an infinity of equilibria.
\end{remark}

\section{Snell Pairs and Equilibria}\label{se:Snell}

In this section we provide a theory which extends both the Snell envelope of classical optimal stopping and the recursion from the finite-horizon case in Theorem~\ref{th:backwardAlgo}. As mentioned in Section~\ref{se:finiteHorizon}, the value process $V$ (which is the Snell envelope of $G$ in the classical case) needs to be complemented with the survival process $S$ to provide a sufficient statistic for an agent's optimality criterion. We introduce the Snell pair $(V,S)$ pragmatically in Definition~\ref{de:Snellpair} by stating the properties that will be used most often in the proofs. Alternately, both processes can be described through a more elegant Snell envelope property (Lemma~\ref{le:SnellEquiv}), whence the terminology. The main result of this section will be a correspondence between Snell pairs and equilibria; see Theorem~\ref{th:SnellEquilibrium} and its corollary.

We focus on equilibria with early stopping preference throughout this section. Other preferences could be accommodated but lead to (even) heavier notation. For the infinite-horizon case $T=\infty$, we assume throughout that 
\begin{equation}\label{eq:Ginfty}
  G_\infty=\limsup_{t\rightarrow \infty} G_t.
\end{equation}
We also recall that $\T=\{0,1,\dots\}$ if $T=\infty$, so that $\T\cup\{T\}$ will be used when the horizon is included in the index set.

\begin{definition}\label{de:Snellpair}
  A pair $(V,S)$ consisting of adapted processes $V=(V_{t})_{t\in\T}$ and $S=(S_{t})_{t\in\T\cup\{T\}}$ is said to be a \emph{Snell pair} (with early stopping preference) if the following hold:
  \begin{enumerate}
  \item $0< S_t\le 1$ on $D_t$ and $S_t=0$ on $D^c_t$ for all $t\in\T$, and $V_t= G_t$ for all $t\geq T_{e}$.\footnote{The property that $V_t= G_t$ for $t\geq T_{e}$ is in fact redundant with~(iii).}
  \item Given $S$, $V$ is the smallest adapted process which dominates $G$ %
  and renders $(SV)_{\cdot\wedge T_{e}}$ a supermartingale.\footnote{We follow the usual convention that supermartingale properties, Snell envelopes, etc., are understood on $\T$ unless explicitly mentioned; that is, $t=\infty$ is not included.}
  \item Given $V$, $S$ is the smallest nonnegative supermartingale on $\T\cup\{T\}$ satisfying $S_t=1$ on  $D_t\cap\{V_t=G_t\}$ for all $t\in\T$ as well as $S_\infty=1_{\{\sigma=\infty\}}$ if $T=\infty$.
  \item For all $t_0<T$, the process $(S^{t_0} V)_{\cdot\wedge T_{e}}$ is a supermartingale, where 
  \[
    S^{t_0}_t:=1_{\{t\neq t_0\}}S_t+1_{\{t=t_0\}} E[S_{t+1}|\cF_{t}].
  \]
  \end{enumerate}
\end{definition}

Some comments on the definition are in order before we connect Snell pairs with equilibrium stopping policies.

\begin{lemma}
  Properties (i)--(iii) imply the following ``martingale properties away from the obstacle,''
  \begin{itemize}
  \item[{(v)}] if $t<T_{e}$ and $V_t>G_t$, then $S_t=E[S_{t+1}|\cF_{t}]$ and $S_tV_t=E[S_{t+1}V_{t+1}|\cF_{t}]$.
  \end{itemize}
\end{lemma}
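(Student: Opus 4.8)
\emph{Plan.} Each of the two identities in~(v) is a ``martingale off the obstacle'' statement, and I would derive them from two \emph{different} minimality properties: the identity for $SV$ from the minimality of $V$ in~(ii), and the identity for $S$ from the minimality of $S$ in~(iii). In both cases the mechanism is the classical one for Snell envelopes: assume a strict supermartingale inequality on a set of positive probability, perturb the relevant process downward at the single time $t$ on that set, and contradict minimality by checking that the perturbed process still lies in the prescribed class. I would first dispose of the degenerate region. Fix $t<T_{e}$ and work on $\{V_{t}>G_{t}\}$. On $D_{t}^{c}$ both identities are trivial: there $S_{t}=0$ by~(i), and since $(D_{s})$ is decreasing we have $S_{t+1}=0$ on $D_{t}^{c}$, so $E[S_{t+1}|\cF_{t}]=E[S_{t+1}V_{t+1}|\cF_{t}]=0$ on the $\cF_{t}$-set $D_{t}^{c}$ (using $0\cdot\Delta=0$). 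Hence it remains to prove both identities on $D_{t}$, where $S_{t}>0$; moreover a strict supermartingale inequality can only occur inside $D_{t}$, which is what makes $S_{t}>0$ available in the arguments below.

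For $S_{t}V_{t}=E[S_{t+1}V_{t+1}|\cF_{t}]$: since $t<T_{e}$, restricting the supermartingale property of $(SV)_{\cdot\wedge T_{e}}$ from~(ii) to the $\cF_{t}$-set $\{t<T_{e}\}$ and using $(t+1)\wedge T_{e}=t+1$ there yields $S_{t}V_{t}\ge E[S_{t+1}V_{t+1}|\cF_{t}]$, so only the reverse is at issue. Suppose it failed on some $A\in\cF_{t}$ with $A\subseteq D_{t}\cap\{t<T_{e},\,V_{t}>G_{t}\}$ and $P(A)>0$. I would define $\tilde V$ to agree with $V$ except that $\tilde V_{t}=\max(G_{t},\,E[S_{t+1}V_{t+1}|\cF_{t}]/S_{t})$ on $A$; since $S_{t}>0$ and $V_{t}$ strictly exceeds both arguments of the maximum on $A$, this is a strict decrease that still dominates $G$. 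The only supermartingale inequalities of $(S\tilde V)_{\cdot\wedge T_{e}}$ affected by the change are those at times $t$ and $t-1$: at $t$ the choice of $\tilde V_{t}$ gives $S_{t}\tilde V_{t}\ge E[S_{t+1}V_{t+1}|\cF_{t}]$ by construction, while at $t-1$ lowering the later value only relaxes $S_{t-1}V_{t-1}\ge E[S_{t}\tilde V_{t}|\cF_{t-1}]$. Thus $\tilde V$ lies in the class of~(ii) but is strictly smaller than $V$ on $A$, contradicting minimality; so equality holds.

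For $S_{t}=E[S_{t+1}|\cF_{t}]$: the supermartingale property in~(iii) gives ``$\ge$'', and I would assume strict inequality on some $A\in\cF_{t}$ with $A\subseteq D_{t}\cap\{t<T_{e},\,V_{t}>G_{t}\}$ and $P(A)>0$. Setting $\tilde S=S$ except $\tilde S_{t}=E[S_{t+1}|\cF_{t}]$ on $A$ produces a nonnegative supermartingale on $\T\cup\{T\}$ (the inequality becomes an equality at $t$ and is only relaxed at $t-1$) that still meets every requirement of~(iii): the contact constraint $S_{t}=1$ on $D_{t}\cap\{V_{t}=G_{t}\}$ is untouched because $A\subseteq\{V_{t}>G_{t}\}$ is disjoint from it, and the terminal value $S_{\infty}=1_{\{\sigma=\infty\}}$ is unchanged. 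As $\tilde S$ is strictly smaller than $S$ on $A$, this contradicts the minimality of $S$ in~(iii), giving equality.

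The main obstacle I anticipate is bookkeeping rather than depth: one must make sure the single-time perturbations respect the \emph{stopped} supermartingale structure (the freezing at $T_{e}$, under which the modification at time $t>T_{e}$ leaves earlier stopped values intact) and, crucially for the second identity, that the perturbation does not disturb the contact constraint built into~(iii). Both points hinge on the elementary but essential observation that decreasing a process at time $t$ relaxes, rather than violates, the supermartingale inequality at time $t-1$, so that each perturbed process remains admissible for the respective minimality property.
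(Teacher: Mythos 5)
Your proposal is correct and takes essentially the same route as the paper's own (very terse) proof: a single-time downward perturbation of $S$ (resp.\ $V$) on the set where the supermartingale inequality is strict, contradicting the minimality in~(iii) (resp.\ (ii)). Your only refinements are bookkeeping the paper leaves implicit, notably taking the maximum with $G_t$ so that the perturbed $\tilde V$ still dominates $G$, where the paper simply replaces $V_t$ by $E[S_{t+1}V_{t+1}|\cF_{t}]/S_{t}$.
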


\begin{proof}
  If the first identity fails for some $t$, replacing $S_t$ by $E[S_{t+1}|\cF_{t}]$ yields a smaller supermartingale with the required properties, contradicting~(iii). If the second identity fails, replacing $V_t$ by $E[S_{t+1}V_{t+1}|\cF_{t}]/S_{t}$ yields a smaller process with the required properties, contradicting~(ii).
\end{proof} 

\begin{lemma}\label{le:SnellEquiv}
  Properties (i)--(iii) are jointly equivalent to the following:
  \begin{enumerate}
   \item[(i')] $S_t>0$ on $D_t$ for all $t\in\T$ and $V_t= G_t$ for all $t\geq T_{e}$.
   \item[(ii')] $(SV)_{\cdot\wedge T_{e}}$ is the Snell envelope of $(SG)_{\cdot\wedge T_{e}}$.
   \item[(iii')] $S$ is the Snell envelope of $1_{\{t<\infty\}\cap\{V_t=G_t\}\cap D_t}+1_{\{t=\sigma=\infty\}}$ on $\T\cup\{T\}$.
  \end{enumerate}
\end{lemma}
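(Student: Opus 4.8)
The plan is to prove the two conjunctions equivalent by decoupling the statement about $V$ (the pair (ii)/(ii')) from the statement about $S$ (the pair (iii)/(iii')), after first extracting from either formulation the structural facts about the support and range of $S$ that make the two descriptions interchangeable. The guiding principle is that multiplication by $S$ converts the constrained minimization defining $V$ into an ordinary Snell-envelope minimization, and that the Snell envelope characterization of $S$ is merely a repackaging of the minimality in (iii).

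First I would treat (iii) and (iii') as standalone statements about a given $V$ and show they both characterize $S$ as the Snell envelope of the obstacle $H$ given by $H_t = 1_{D_t \cap \{V_t = G_t\}}$ at finite times and $H_\infty = 1_{\{\sigma=\infty\}}$ when $T=\infty$. The two elementary facts I would use are: (a) since $H\le 1$, the process $\min(S,1)$ is again a nonnegative supermartingale dominating $H$, so the smallest such supermartingale obeys $S\le 1$; and (b) since $(D_t)$ is decreasing and $H$ vanishes on $D^c$, the process $S\,1_{D_\cdot}$ is again a supermartingale meeting all the constraints, forcing $S=0$ on $D^c$ by minimality. With $0\le S\le 1$ and $S=0$ on $D^c$ in hand, the equality constraints "$S_t=1$ on $D_t\cap\{V_t=G_t\}$" and "$S_\infty=1_{\{\sigma=\infty\}}$" together with nonnegativity are equivalent to domination $S\ge H$, so both (iii) and (iii') say that $S$ is the smallest nonnegative supermartingale dominating $H$; hence they coincide. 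Combining these range/support properties with the positivity $S_t>0$ on $D_t$ supplied by (i')—or assumed in (i)—upgrades (i') to the full (i), and conversely (i) trivially gives (i').

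Next I would establish (ii) $\Leftrightarrow$ (ii') using the support properties of $S$ now available in both directions. Because $S_t>0$ on $D_t$ and $S_t=0$ on $D_t^c$ (where $G_t=\Delta$ and $0\cdot\Delta=0$), the map $V\mapsto SV$ is, on the domain of relevance, an order isomorphism: on $D_t$ the inequality $V_t\ge G_t$ is equivalent to $S_tV_t\ge S_tG_t$, while on $D_t^c$ both products vanish regardless of $V_t$. Consequently an adapted $V\ge G$ makes $(SV)_{\cdot\wedge T_e}$ a supermartingale precisely when $(SV)_{\cdot\wedge T_e}$ is a supermartingale dominating $(SG)_{\cdot\wedge T_e}$, and the pointwise-smallest such $V$ corresponds, via division by the strictly positive $S$ on $D$, to the pointwise-smallest such supermartingale, i.e. to the Snell envelope of $(SG)_{\cdot\wedge T_e}$. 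Running this correspondence in both directions gives (ii) $\Leftrightarrow$ (ii'), and assembling the three equivalences yields the lemma.

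I expect the main obstacle to be the boundary bookkeeping rather than any deep idea. Concretely, for the reverse implication of (ii) $\Leftrightarrow$ (ii') one must verify that the Snell envelope $U$ of $(SG)_{\cdot\wedge T_e}$ vanishes on $D^c$ (again an absorbing-at-zero argument, since the obstacle does) and then check that the recovered process $\hat V_t:=U_t/S_t$ on $D_t$, set equal to $G_t$ for $t\ge T_e$, genuinely dominates $G$ and satisfies $(S\hat V)_{\cdot\wedge T_e}=U$; minimality of $V$ then forces $SV\le U$, while $SV$ being a dominating supermartingale forces $SV\ge U$. The stopping at the random time $T_e$ and the auxiliary state $\Delta$ must be handled consistently at each step, and in the infinite-horizon case one must confirm that the terminal value of the Snell envelope at $\infty$ reproduces $H_\infty$, so that $S_\infty=1_{\{\sigma=\infty\}}$ holds as required. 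These steps are routine once the order-isomorphism viewpoint is in place.
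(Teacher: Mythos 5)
Your proposal follows essentially the same route as the paper's proof, only written out in more detail: the paper also gets $0\le S\le 1_{D}$ by comparing the Snell envelope in (iii') with the nonnegative supermartingale $1_{D_t}$ (your $S\wedge 1$ and $S1_{D}$ comparisons are the same minimality argument split in two), uses the ``$U\wedge 1$ is a supermartingale'' observation to identify the equality constraints of (iii) with the domination constraints of (iii'), and treats (ii)$\Leftrightarrow$(ii') as immediate from the order isomorphism $V\mapsto SV$ once $S>0$ on $D$ and $S=0$ on $D^c$ are known.

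One side remark in your last paragraph is wrong, though it does not sink the argument: the stopped obstacle $(SG)_{\cdot\wedge T_e}$ does \emph{not} vanish on $D^c$. Since $T_e\le\sigma$, on $D_t^c$ one has $t\ge T_e$ and $(SG)_{t\wedge T_e}=S_{T_e}G_{T_e}$, which is generally nonzero on $\{T_e<\sigma\}$ (this is exactly the situation where an agent is forced to stop at $T_e$ while still inside the domain); consequently its Snell envelope $U$ does not vanish there either. The fact you actually need for the reverse direction of (ii)$\Leftrightarrow$(ii') is not vanishing but \emph{freezing}: after $T_e$ the obstacle is constant, so $U_t=(SG)_{T_e}$ on $\{t\ge T_e\}$, and with this observation your recovered process $\hat V$ (equal to $U/S$ on $\{t<T_e\}\subseteq D_t$ and to $G$ from $T_e$ on) does satisfy $(S\hat V)_{\cdot\wedge T_e}=U$ and $\hat V\ge G$, completing the argument as you intended.
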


\begin{proof}
  Clearly (i) implies (i'). To see the reverse, suppose that $S_t>0$ on $D_t$. Then  $S'_t:=1_{D_t}$, $t\leq T$ is a nonnegative supermartingale. Thus, (iii') yields that $0\le S_t\le 1_{D_t}$ and~(i) follows. Given~(i'), the equivalence of (ii) and (ii') is immediate. For the equivalence of~(iii) and~(iii'), note that $U\wedge1$ is a supermartingale whenever $U$ is a supermartingale. %
\end{proof}

\begin{lemma}\label{le:terminalCond}
  (a) The processes $(SV)_{\cdot\wedge T_{e}}$ and $(S^{t_{0}}V)_{\cdot\wedge T_{e}}$ occurring in~(ii') and~(iv) are uniformly integrable.

  (b) Let $T=\infty$ and let $(V,S)$ be a Snell pair. Then
  \begin{equation}\label{eq:Sinfty}
    \lim_{t\to\infty}S_{t}=S_{\infty}=1_{\{\sigma=\infty\}},
  \end{equation}
  \begin{equation}\label{eq:SVinfty}
    \lim_{t\to\infty}(SV)_{t\wedge T_{e}}=1_{\{T_{e} \lhd \sigma\}} G_{T_{e}}.
  \end{equation}
\end{lemma}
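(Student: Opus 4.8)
The plan is to dominate both processes by a single uniformly integrable martingale and then read off the terminal behaviour from the Snell envelope representation. Set $\Gamma:=\sup_{t\le T}|G_t|1_{D_t}$, which lies in $L^1$ by the standing assumption, and let $N_t:=E[\Gamma|\cF_t]$. Then $(N_t)$ is a uniformly integrable martingale, and each stopped version $(N_{t\wedge T_{e}})$ is again uniformly integrable (equivalently, the family $\{E[\Gamma|\cF_{t\wedge T_{e}}]\}_t$ consists of conditional expectations of the fixed integrable variable $\Gamma$, hence is uniformly integrable). For part~(a) it therefore suffices to prove the two-sided bounds
\begin{equation}\label{eq:plan-dom}
  |(SV)_{t\wedge T_{e}}|\le N_{t\wedge T_{e}}\qquad\text{and}\qquad |(S^{t_{0}}V)_{t\wedge T_{e}}|\le N_{t\wedge T_{e}},
\end{equation}
since domination of a family by a uniformly integrable family forces uniform integrability.

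For the lower bounds I would use $S,S^{t_{0}}\ge0$ and $V\ge G$ together with $0\le S_t,S^{t_{0}}_t\le 1_{D_t}$ (the latter for $S^{t_{0}}$ because $D_{t+1}\subseteq D_t$ gives $E[S_{t_{0}+1}|\cF_{t_{0}}]\le 1_{D_{t_{0}}}$). These yield the pointwise bound $(SV)_{t\wedge T_{e}}\ge (SG)_{t\wedge T_{e}}\ge -\Gamma$, and since the left-hand side is $\cF_{t\wedge T_{e}}$-measurable, conditioning gives $(SV)_{t\wedge T_{e}}\ge -N_{t\wedge T_{e}}$; the same argument applies verbatim to $S^{t_{0}}$. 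For the upper bound on $(SV)_{\cdot\wedge T_{e}}$ I would invoke~(ii'): the martingale $(N_{\cdot\wedge T_{e}})$ is a supermartingale dominating $(SG)_{\cdot\wedge T_{e}}$ (again by $S\le 1_D$, so $(SG)_{t\wedge T_{e}}\le\Gamma$ and then conditioning), so by minimality of the Snell envelope $(SV)_{\cdot\wedge T_{e}}$ we get $(SV)_{t\wedge T_{e}}\le N_{t\wedge T_{e}}$. The upper bound for $S^{t_{0}}$ coincides with that of $S$ except at the single index $t_{0}$, where a short case distinction closes~\eqref{eq:plan-dom}: on the continuation region property~(v) gives $S_{t_{0}}=E[S_{t_{0}+1}|\cF_{t_{0}}]$, so the value is unchanged, whereas on the stopping region $V_{t_{0}}=G_{t_{0}}$ and $E[S_{t_{0}+1}|\cF_{t_{0}}]G_{t_{0}}=E[S_{t_{0}+1}G_{t_{0}}|\cF_{t_{0}}]\le E[\Gamma|\cF_{t_{0}}]=N_{t_{0}}$.

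For part~(b), the limit $S_t\to S_\infty$ comes first. As a bounded nonnegative supermartingale, $S_t$ converges a.s.; on $\{\sigma<\infty\}$ we have $S_t=0$ for all $t\ge\sigma$, so the limit is $0$, while on $\{\sigma=\infty\}$ the supermartingale inequality on $\T\cup\{\infty\}$ from~(iii) reads $E[1_{\{\sigma=\infty\}}|\cF_t]=E[S_\infty|\cF_t]\le S_t\le 1$, and letting $t\to\infty$ (Lévy) pins the limit to $1$. This gives~\eqref{eq:Sinfty}. For~\eqref{eq:SVinfty} I would first note that by part~(a) the supermartingale $(SV)_{\cdot\wedge T_{e}}$ is uniformly integrable, hence converges a.s.; it then remains to identify the limit. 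On $\{T_{e}<\infty\}$ the process is eventually constant and equal to $S_{T_{e}}V_{T_{e}}=1_{D_{T_{e}}}G_{T_{e}}=1_{\{T_{e}\lhd\sigma\}}G_{T_{e}}$, using $V_{T_{e}}=G_{T_{e}}$ and $S_{T_{e}}=1_{D_{T_{e}}}$ from~(i) and~(iii).

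The crux, and the step I expect to be the main obstacle, is the limit on $\{T_{e}=\infty\}$, where one must show $\lim_t(SV)_t=1_{\{\sigma=\infty\}}G_\infty$. Writing $M_t:=(SV)_{t\wedge T_{e}}$ and $H_t:=(SG)_{t\wedge T_{e}}$, property~(ii') and $|H_t|\le\Gamma$ give the representation $M_t=\esssup_{t\le\tau\le\infty}E[H_\tau|\cF_t]$ with terminal value $H_\infty=(SG)_{\infty\wedge T_{e}}$, which one checks equals $1_{\{T_{e}\lhd\sigma\}}G_{T_{e}}$ on each of $\{T_{e}<\infty\}$, $\{T_{e}=\infty,\sigma<\infty\}$ and $\{T_{e}=\infty,\sigma=\infty\}$. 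Taking $\tau=\infty$ gives $M_t\ge E[H_\infty|\cF_t]\to H_\infty$, so $\liminf_t M_t\ge H_\infty$. For the reverse inequality I would bound $M_t\le E[Y_t|\cF_t]$ with $Y_t:=\sup_{s\ge t}H_s$; since $G_\infty=\limsup_t G_t$ and $S_t\to 1_{\{\sigma=\infty\}}$, the decreasing sequence $Y_t$ converges to $\limsup_s H_s=H_\infty$. For fixed $m$ and $t\ge m$ one has $E[Y_t|\cF_t]\le E[Y_m|\cF_t]$, and Lévy's theorem gives $E[Y_m|\cF_t]\to Y_m$ as $t\to\infty$; hence $\limsup_t M_t\le Y_m$ for every $m$, and $m\to\infty$ yields $\limsup_t M_t\le H_\infty$. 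Combining the two bounds gives $\lim_t M_t=H_\infty=1_{\{T_{e}\lhd\sigma\}}G_{T_{e}}$, which is~\eqref{eq:SVinfty}. The delicate points are the simultaneous variation of integrand and $\sigma$-field (handled by the fixed-$m$ device in place of Hunt's lemma) and the bookkeeping that reconciles $H_\infty$, $\limsup_s H_s$, and $1_{\{T_{e}\lhd\sigma\}}G_{T_{e}}$ on the three events above.
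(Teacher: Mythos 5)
Your proposal is correct, and its skeleton is the same as the paper's: uniform integrability from the $L^{1}$-majorant $\Gamma=\sup_{t}|G_{t}|1_{D_{t}}$, the limit \eqref{eq:Sinfty} from bounded-supermartingale convergence combined with the closure inequality $E[S_{\infty}|\cF_{t}]\le S_{t}$ from~(iii), and \eqref{eq:SVinfty} by identifying $\limsup_{t}(SG)_{t\wedge T_{e}}$ with $1_{\{T_{e}\lhd\sigma\}}G_{T_{e}}$ on the events $\{T_{e}<\infty\}$, $\{T_{e}=\infty,\sigma<\infty\}$, $\{T_{e}=\sigma=\infty\}$. The genuine difference is that the paper invokes two classical facts as black boxes---that a Snell envelope of a process with an $L^{1}$-majorant is uniformly integrable, and the limit property $\lim_{t}(SV)_{t\wedge T_{e}}=\limsup_{t}(SG)_{t\wedge T_{e}}$---whereas you prove both: UI via the two-sided domination $|(SV)_{t\wedge T_{e}}|\le E[\Gamma|\cF_{t\wedge T_{e}}]$ (including a careful case distinction at $t_{0}$ for $S^{t_{0}}$, which the paper dispatches with ``and then so is''), and the limit property via the fixed-$m$ device $\limsup_{t}M_{t}\le\lim_{t}E[Y_{m}|\cF_{t}]=Y_{m}$ followed by $m\to\infty$, where $Y_{m}=\sup_{s\ge m}(SG)_{s\wedge T_{e}}$. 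This buys a self-contained lemma at the cost of length. One step you assert rather than justify: the representation $M_{t}=\esssup_{t\le\tau\le\infty}E[H_{\tau}|\cF_{t}]$ allowing $\tau=\infty$ does not follow verbatim from~(ii') (the Snell envelope is defined there as the smallest dominating supermartingale on $\T$, with $t=\infty$ excluded); extending the esssup representation to $[t,\infty]$-valued stopping times itself requires closing the uniformly integrable supermartingale at infinity. Fortunately both uses of it can be replaced by cheaper arguments: for the lower bound, part~(a) gives a.s.\ convergence of $M_{t}$, and $M_{t}\ge H_{t}$ then yields $\lim_{t}M_{t}\ge\limsup_{t}H_{t}=H_{\infty}$ directly; for the upper bound, $M_{t}\le E[Y_{m}|\cF_{t}]$ for $t\ge m$ follows from minimality of the Snell envelope restricted to $[m,\infty)$, since the martingale $t\mapsto E[Y_{m}|\cF_{t}]$ dominates the obstacle there. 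With that repair your argument is complete.
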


\begin{proof}
  (a) Recall that $\sup_{t}|G_{t}|1_{D_{t}}\in L^{1}$ and that the Snell envelope of any process with an $L^{1}$-majorant is uniformly integrable. In view of of (ii'), it follows that $(SV)_{\cdot\wedge T_{e}}$ is uniformly integrable, and then so is $(S^{t_{0}}V)_{\cdot\wedge T_{e}}$.
  
  (b) We have from~(i) and~(iii) that $S$ is a bounded supermartingale with $S_{\infty}=1_{D_{\infty}}$. In particular, 
  $S_{t}\geq E[S_{\infty}|\cF_{t}]$. Passing to the limit, martingale convergence yields that $\liminf_{t} S_{t}\geq S_{\infty}$. Conversely, (i)~clearly implies that $\limsup_{t} S_{t}\leq1$ and that $\lim S_{t}=0$ on $\cup_{t} D_{t}^{c}=D_{\infty}^{c}$. Hence, \eqref{eq:Sinfty} is proved.
  
  Part (a), (ii') and the classical limit property of the Snell envelope yield that $\lim_{t\to\infty}(SV)_{t\wedge T_{e}}=\limsup_{t\to\infty}(SG)_{t\wedge T_{e}}$. Moreover, using~\eqref{eq:Sinfty} and~\eqref{eq:Ginfty}, 
  \begin{align*}
\limsup_{t\to\infty}(SG)_{t\wedge T_{e}}
  &=1_{\{T_{e}<\infty\}}1_{D_{T_{e}}} G_{T_{e}}+1_{\{T_{e}=\infty\}} \limsup_{t\rightarrow \infty} G_t\\
  &=1_{\{T_{e}<\sigma\}} G_{T_{e}}+1_{\{T_{e}
  =\sigma=\infty\}} G_\infty =1_{\{T_{e} \lhd \sigma\}} G_{T_{e}}
  \end{align*}
  and thus~\eqref{eq:SVinfty} follows.
\end{proof} 

We can now state the main result which relates Snell pairs to equilibria, thus extending the classical Snell envelope theory to conditional optimal stopping.

\begin{theorem}\label{th:SnellEquilibrium}
(a) Let $(V,S)$ be a Snell pair. Then, $\theta=1_{\{G\geq V\}}$ defines an equilibrium stopping policy with early stopping preference. Moreover,%
\begin{align}
  V_t&= 1_{\{t<T_{e}\}}\max(G_t, J_{t}(\theta))+1_{\{t\ge T_{e}\}}G_t, \quad t\in\T, \label{eq:theta-to-V} \\
  S_t&=1_{D_t}\left(1_{\{V_t=G_t\}}+1_{\{V_t>G_t\}} P(\cL_{t}\theta\lhd\sigma|\cF_{t})\right), \quad t\in\T\label{eq:theta-to-S}
\end{align}
and $S_{\infty}=\lim_{t\to\infty} S_{t}= 1_{\{\sigma=\infty\}}$ if $T=\infty$. \\[-.8em]

(b) If $\theta$ is an equilibrium stopping policy with early stopping preference, then there exists a unique Snell pair $(V,S)$ such that $\theta=1_{\{G\geq V\}}$. This Snell pair is given by \eqref{eq:theta-to-V}--\eqref{eq:theta-to-S}.
\end{theorem}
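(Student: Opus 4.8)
The plan is to treat (a) and (b) as two halves of one correspondence and to extract the uniqueness in (b) for free from (a): if two Snell pairs $(V,S)$ and $(V',S')$ induce the same policy $\theta=1_{\{G\geq V\}}=1_{\{G\geq V'\}}$, then by (a) both satisfy \eqref{eq:theta-to-V}--\eqref{eq:theta-to-S}, whose right-hand sides depend only on $\theta$, so $(V,S)=(V',S')$. Hence the real work is (a) together with the existence half of (b). Throughout I would use that $V\geq G$ by (ii), so $\{\theta_t=1\}=\{V_t=G_t\}$ and $\{\theta_t=0\}=\{V_t>G_t\}$.

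For (a) I would argue in three steps, each an infinite-horizon upgrade of Lemma~\ref{le:survivalExpr}. First, writing $\tilde S_t$ for the right-hand side of \eqref{eq:theta-to-S}, a one-step path decomposition (the analog of \eqref{eq:proofSurvivalRec}) gives $\tilde S_{t+1}=P(\cL_t\theta\lhd\sigma|\cF_{t+1})$, hence $E[\tilde S_{t+1}|\cF_t]=P(\cL_t\theta\lhd\sigma|\cF_t)$, so $\tilde S$ obeys the local recursion $\tilde S_t=E[\tilde S_{t+1}|\cF_t]$ on $\{\theta_t=0\}$ and matches $S$ on $D_t^c$ and on $D_t\cap\{\theta_t=1\}$; by property (v) the actual $S$ obeys the same recursion on $\{\theta_t=0\}$. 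Thus $\Delta:=S-\tilde S$ is bounded, vanishes off the continuation set, and satisfies $\Delta_t=E[\Delta_{t+1}|\cF_t]$ on $\{\theta_t=0\}$; the stopped process $\Delta_{\cdot\wedge\cL_t\theta}$ is a bounded martingale whose terminal value is $0$ (because $\theta=1$ at $\cL_t\theta<\infty$, and because of \eqref{eq:Sinfty} on $\{\cL_t\theta=\infty\}$), so optional stopping gives $\Delta\equiv0$, i.e. $S=\tilde S$ and admissibility. Second, the identical martingale-difference argument applied to $SV$ against $E[G_{\cL_t\theta}1_{\{\cL_t\theta\lhd\sigma\}}|\cF_t]$---now using the analog of \eqref{eq:proofContRec}, the local martingale property from (v), uniform integrability of $(SV)_{\cdot\wedge T_e}$, and the terminal value \eqref{eq:SVinfty}---yields $E[S_{t+1}V_{t+1}|\cF_t]=E[G_{\cL_t\theta}1_{\{\cL_t\theta\lhd\sigma\}}|\cF_t]$ for all $t<T$, hence the recursion $J_t(\theta)=E[S_{t+1}V_{t+1}|\cF_t]/E[S_{t+1}|\cF_t]$ and, on $\{V_t>G_t\}$, the identity $V_t=J_t(\theta)$. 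Third, for the equilibrium: on $\{V_t>G_t\}$ one has $J_t(\theta)=V_t>G_t$, so continuing is strictly optimal; on $\{V_t=G_t\}$ with $t<T_e$ the outgoing supermartingale inequality of $(S^{t}V)_{\cdot\wedge T_e}$ in property (iv) reads $E[S_{t+1}|\cF_t]\,V_t\geq E[S_{t+1}V_{t+1}|\cF_t]$, i.e. $G_t=V_t\geq J_t(\theta)$, so stopping is optimal. This gives $\Phi(\theta)=\theta$ with early stopping and, with the first two steps, exactly \eqref{eq:theta-to-V}--\eqref{eq:theta-to-S}.

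For existence in (b) I would define $(V,S)$ by \eqref{eq:theta-to-V}--\eqref{eq:theta-to-S} and verify (i)--(iv). Property (i) and $1_{\{G\geq V\}}=\theta$ are immediate from the formulas together with admissibility of $\theta$ and the early-stopping identity $\theta_t=1_{\{G_t\geq J_t(\theta)\}}$. For (ii) and (iii) I would instead check the Snell-envelope characterizations (i')--(iii') of Lemma~\ref{le:SnellEquiv}: the pathwise identities show that $S$ and $SV$ satisfy the dynamic-programming recursions on $\{t<T_e\}$, with the ``$\max$'' attained on the correct branch because the equilibrium forces $G_t\geq J_t(\theta)$ on $\{\theta_t=1\}$ and $G_t\leq J_t(\theta)$ on $\{\theta_t=0\}$; combined with the terminal identities of Lemma~\ref{le:terminalCond} and uniform integrability this identifies them as the respective Snell envelopes, which supplies the minimality in (ii),(iii). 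Property (iv) I would check node by node: away from $t_0$ its inequalities are those of $SV$; the outgoing step $t_0\to t_0+1$ reduces to $V_{t_0}\geq J_{t_0}(\theta)$, true by \eqref{eq:theta-to-V}; the incoming step into $t_0$, where the smoothing $S^{t_0}_{t_0}=E[S_{t_0+1}|\cF_{t_0}]$ differs from $S_{t_0}$ precisely at a stopping node, requires a separate argument.

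The finite-horizon content is essentially Theorem~\ref{th:backwardAlgo} and Lemma~\ref{le:survivalExpr}, so the genuine difficulties live entirely in the infinite-horizon limit, and I expect two pressure points. The first, and I think the main obstacle, is running the two martingale-difference arguments of step~2 without a terminal time: one must justify passing to the limit on $\{\cL_t\theta=\infty\}$, which is exactly where the boundary identities \eqref{eq:Sinfty} and \eqref{eq:SVinfty}, the uniform integrability of $(SV)_{\cdot\wedge T_e}$, and the standing assumption $E[\sup_{t}|G_t|1_{D_t}]<\infty$ must be deployed carefully, including a dominated-convergence passage through the conditional expectations defining $J_t$. The second delicate point is the incoming step of property (iv) in (b): since $S^{t_0}$ lowers the survival weight at a stopping node, preserving the supermartingale inequality coming \emph{into} $t_0$ does not follow from the supermartingale property of $SV$ alone and must be established from the equilibrium structure at $(t_0-1,t_0)$.
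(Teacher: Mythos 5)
Your part (a) and the uniqueness argument in (b) are correct and essentially the paper's own proof: the paper likewise combines the martingale property (v) with optional sampling (using the boundedness of $S$, the uniform integrability from Lemma~\ref{le:terminalCond}, and the terminal identities \eqref{eq:Sinfty}--\eqref{eq:SVinfty} to handle $\{\cL_t\theta=\infty\}$) to obtain \eqref{eq:theta-to-S}--\eqref{eq:theta-to-V} on the continuation set, and on the stopping set it invokes property (iv) exactly as you do, through the inequality $E[S_{t+1}|\cF_t]\,V_t\ge E[S_{t+1}V_{t+1}|\cF_t]$; your difference-martingale packaging is cosmetic. Your route to (ii)/(iii) in part (b) (recursion plus uniform integrability plus terminal values, via Lemma~\ref{le:SnellEquiv}) is also workable and roughly parallel to the paper, which instead verifies supermartingale domination and then plugs the stopping time $\hat\tau$ induced by $\theta$ into the $\esssup$ representation of the Snell envelope to get minimality. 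One caveat there: quoting Lemma~\ref{le:terminalCond} in part (b) is circular, since that lemma presupposes that $(V,S)$ is a Snell pair; you must re-derive $\lim_t S_t=1_{\{\sigma=\infty\}}$ directly from \eqref{eq:theta-to-S} by martingale convergence, as the paper does at the start of its proof of (b).

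The genuine gap is the incoming step of property (iv) in part (b), which you explicitly leave open (``requires a separate argument \dots must be established from the equilibrium structure at $(t_0-1,t_0)$'') and never supply. This is not a harmless omission: (iv) is precisely the property that your own part (a) consumes on the stopping set, so the claimed correspondence is incomplete without it. In fact no new equilibrium argument at $(t_0-1,t_0)$ is needed. The paper first computes, for $t<T_e$,
\begin{equation*}
S^{t}_{t}=E[S_{t+1}|\cF_t]=P(\cL_t\theta\lhd\sigma|\cF_t)\le S_t,
\end{equation*}
with equality on $\{\theta_t=0\}$, and records as part of its displayed chain the pointwise comparison $S^{t}_{t}V_t\le S_tV_t$. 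Granting that, the incoming inequality is a one-liner by monotonicity of conditional expectation:
\begin{equation*}
E\bigl[(S^{t_0}V)_{t_0}\big|\cF_{t_0-1}\bigr]=E\bigl[S^{t_0}_{t_0}V_{t_0}\big|\cF_{t_0-1}\bigr]\le E\bigl[S_{t_0}V_{t_0}\big|\cF_{t_0-1}\bigr]\le S_{t_0-1}V_{t_0-1}=(S^{t_0}V)_{t_0-1},
\end{equation*}
where the final inequality is the already-established supermartingale property of $(SV)_{\cdot\wedge T_e}$. So the ``separate argument'' you anticipated is just the observation that lowering the survival weight at the node $t_0$ (which only happens on the stopping set, where $V_{t_0}=G_{t_0}$) can only decrease $S_{t_0}V_{t_0}$, after which the $SV$-inequality does the rest. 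Supplying this step, your proposal becomes a complete proof along the paper's lines; without it, the verification of (iv) --- and hence of part (b) --- is missing.
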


As mentioned above, Snell pairs reduce to the usual Snell envelope in the classical case.

\begin{corollary}\label{co:classicalSnell}
Suppose that $D_t=\Omega$ for all $t\in\T$. 

(i) Any equilibrium $\theta$ corresponds to optimal stopping in the classical sense: $E[G_{\tau_{t}}|\cF_{t}]=\esssup_{\tau\ge t} E[G_\tau|\cF_{t}]$ for $\tau_{t}=\inf\{s\ge t: \theta_s=1\}$.

(ii) Any Snell pair consists of $S\equiv1$ and the classical Snell envelope $V_t=\esssup_{\tau\ge t} E[G_\tau|\cF_{t}]$.
\end{corollary}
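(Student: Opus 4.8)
The plan is to prove Corollary~\ref{co:classicalSnell} as a direct specialization of the general theory in Theorem~\ref{th:SnellEquilibrium} and Lemma~\ref{le:SnellEquiv} to the degenerate case where there is no conditioning at all. The key observation is that $D_t=\Omega$ for all $t\in\T$ forces $\sigma=\infty$ a.s., so that the event $\{\cL_t\theta\lhd\sigma\}$ is always the full space, and $T_e=T$. Consequently the continuation value $J_t(\theta)$ collapses to an ordinary conditional expectation $E[G_{\cL_t\theta}\mid\cF_t]$ with trivial normalization, and the whole survival apparatus should become inert.

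For part~(ii), I would start from Definition~\ref{de:Snellpair}(i): since $D_t=\Omega$, the requirement $0<S_t\le1$ on $D_t$ together with property~(iii) pins down $S$. Specifically, the constant process $S\equiv1$ is a nonnegative supermartingale satisfying $S_t=1$ on $D_t\cap\{V_t=G_t\}$ trivially (since $D_t=\Omega$ makes the constraint at worst $S_t=1$ where $V_t=G_t$, which $S\equiv1$ meets everywhere), and $S_\infty=1=1_{\{\sigma=\infty\}}$. Because (iii) asks for the \emph{smallest} such supermartingale and (i) forces $S_t\le1$ on all of $\Omega$, any competitor is squeezed to equal~$1$; so $S\equiv1$. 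With $S\equiv1$ fixed, property~(ii) says $V$ is the smallest adapted process dominating $G$ that makes $V_{\cdot\wedge T_e}=V$ a supermartingale, which is exactly the defining property of the classical Snell envelope; hence $V_t=\esssup_{\tau\ge t}E[G_\tau\mid\cF_t]$ by the standard characterization. I would use Lemma~\ref{le:SnellEquiv}(ii') here to identify $(SV)=V$ as the Snell envelope of $(SG)=G$ cleanly.

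For part~(i), given any equilibrium $\theta$ with early stopping preference, Theorem~\ref{th:SnellEquilibrium}(b) produces the associated Snell pair, which by part~(ii) has $S\equiv1$ and $V$ equal to the classical Snell envelope. Then $\theta=1_{\{G\ge V\}}$ means the agent stops exactly where $G$ meets its Snell envelope, and $\tau_t=\inf\{s\ge t:\theta_s=1\}$ is the first time at or after~$t$ that $G$ hits~$V$. This is precisely the classical optimal stopping time for the Snell envelope started at~$t$, so the classical optimal stopping theorem gives $E[G_{\tau_t}\mid\cF_t]=V_t=\esssup_{\tau\ge t}E[G_\tau\mid\cF_t]$. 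For equilibria with other preferences one notes that on $\{V_t=G_t\}$ stopping and continuing yield the same value, so the identity is unaffected; but since this section restricts to early stopping preference, invoking Theorem~\ref{th:SnellEquilibrium} directly suffices.

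The main obstacle I anticipate is not conceptual but verificational: confirming that in the degenerate case the infinite-horizon terminal conventions (\eqref{eq:Ginfty}, the $S_\infty=1_{\{\sigma=\infty\}}$ condition, and the limit relations in Lemma~\ref{le:terminalCond}) are consistent with the classical Snell envelope's behavior at the horizon, so that the uniform integrability and limit property $\lim_t(SV)_{t\wedge T_e}=1_{\{T_e\lhd\sigma\}}G_{T_e}$ reduce correctly to the classical statement $\lim_t V_t=\limsup_t G_t=G_\infty$ when $\sigma=\infty$. Checking that property~(iv) of the Snell pair is automatically satisfied by $S\equiv1$ (where $S^{t_0}=S$ trivially since $E[S_{t+1}\mid\cF_t]=1$) is immediate and causes no difficulty. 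I would keep the argument short by leaning on Theorem~\ref{th:SnellEquilibrium} and Lemma~\ref{le:SnellEquiv} rather than re-deriving Snell envelope facts from scratch.
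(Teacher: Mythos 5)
Your proposal follows the same overall strategy as the paper's proof---specialize the general theory, show $S\equiv1$, and then use property (ii') of Lemma~\ref{le:SnellEquiv} to identify $V$ with the classical Snell envelope---but two points deserve comment. First, a repair: to conclude $S\equiv1$ you argue that ``(iii) asks for the smallest such supermartingale and (i) forces $S_t\le1$, so any competitor is squeezed to equal $1$.'' Minimality against the candidate $S'\equiv1$ only yields $S\le 1$; it cannot yield the reverse inequality. The direction $S_t\ge 1$ comes from the supermartingale property on $\T\cup\{T\}$ tested at the terminal time: every nonnegative supermartingale satisfying the constraints of Definition~\ref{de:Snellpair}(iii) has terminal value $1$ (namely $S_T=1$ on $D_T\cap\{V_T=G_T\}=\Omega$ when $T<\infty$, resp.\ $S_\infty=1_{\{\sigma=\infty\}}=1$ when $T=\infty$), whence $S_t\ge E[S_T|\cF_t]=1$. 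This is exactly how the paper argues---``$S$ is a supermartingale dominated by $1$ with terminal value $1$''---except that it reads the terminal value off \eqref{eq:theta-to-S} rather than the definition; your route through the definition is equally valid (and treats an arbitrary Snell pair directly rather than via the equilibrium correspondence), but the missing inequality must be stated.

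Second, a genuine difference in part (i): you invoke the classical theorem that the first hitting time of the contact set $\{V=G\}$ attains the supremum. This is true here---under the standing assumption $E[\sup_t|G_t|]<\infty$ and the convention \eqref{eq:Ginfty}, the hitting time is optimal even with infinite horizon---but it imports precisely the uniform-integrability and limit issues you flag as your ``main obstacle.'' The paper sidesteps this entirely: since $\sigma=\infty$ makes $J_t(\theta)=E[G_{\cL_t\theta}|\cF_t]$, formula \eqref{eq:theta-to-V} gives directly
\[
V_t=1_{\{\theta_t=1\}}G_t+1_{\{\theta_t=0\}}E[G_{\cL_t\theta}|\cF_t]=E[G_{\tau_t}|\cF_t],
\]
the $\cF_t$-measurable indicators being pulled inside the conditional expectation. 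Combined with $V_t=\esssup_{\tau\ge t}E[G_\tau|\cF_t]$ from part (ii), both claims of the corollary follow at once, with no appeal to hitting-time optimality; this is both shorter and self-contained within the paper's own machinery.
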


\begin{proof}
Note that $\sigma=\infty$.
Let $\theta$ be an equilibrium and $(V,S)$ the associated Snell pair. Then
\begin{align*}
V_t&=1_{\{\theta_t=1\}} G_t+1_{\{\theta_t=0\}}J_{t}(\theta)=1_{\{\theta_t=1\}} G_t+1_{\{\theta_t=0\}} E[G_{\cL_{t}\theta}|\cF_{t}]\\
&= E[1_{\{\theta_t=1\}} G_t+1_{\{\theta_t=0\}} G_{\cL_{t}\theta}|\cF_{t}]
= E[G_{\tau_t}|\cF_{t}].
\end{align*}
We have $S_T= 1$ by \eqref{eq:theta-to-V}--\eqref{eq:theta-to-S}. Since $S$ is a supermartingale dominated by~$1$, we must have $S_t= 1$ for all $t\in\T$. It follows that $SV=V$ is the Snell envelope of $SG=G$; that is,
$V_t=\esssup_{\tau\ge t} E[G_\tau|\cF_{t}]$.
\end{proof}

In the finite horizon-case, Snell pairs correspond to the processes constructed in Section~\ref{se:finiteHorizon}.

\begin{corollary}\label{co:SnellFiniteHorizon}
Let $T<\infty$. Then there exists a unique Snell pair $(V,S)$ and it is determined by the backward recursion of Theorem~\ref{th:backwardAlgo}.
\end{corollary}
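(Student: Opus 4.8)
The plan is to deduce the corollary from the bijection established in Theorem~\ref{th:SnellEquilibrium}, combined with the uniqueness of the finite-horizon equilibrium furnished by Theorem~\ref{th:backwardAlgo}. The two parts of Theorem~\ref{th:SnellEquilibrium} provide mutually inverse maps between Snell pairs (with early stopping preference) and equilibria with early stopping preference: part~(a) sends a Snell pair $(V,S)$ to the equilibrium $1_{\{G\geq V\}}$, while part~(b) sends an equilibrium $\theta$ to the \emph{unique} Snell pair with $\theta=1_{\{G\geq V\}}$. Thus it suffices to transport the uniqueness statement across this correspondence.

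For uniqueness, I would argue as follows. Let $(V,S)$ be any Snell pair. By Theorem~\ref{th:SnellEquilibrium}(a), $\theta:=1_{\{G\geq V\}}$ is an equilibrium with early stopping preference. Since $T<\infty$, Theorem~\ref{th:backwardAlgo} guarantees that there is exactly one such equilibrium, so $\theta$ is forced. Feeding this $\theta$ into Theorem~\ref{th:SnellEquilibrium}(b) shows that the Snell pair with $\theta=1_{\{G\geq V\}}$ is uniquely determined and is given by \eqref{eq:theta-to-V}--\eqref{eq:theta-to-S}; hence every Snell pair coincides with this one, proving uniqueness (and existence, since the recursion produces an equilibrium to begin with).

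It then remains to identify this unique Snell pair with the output of the backward recursion. Here I would invoke Lemma~\ref{le:survivalExpr}, which already establishes that the recursively defined $J_t$ equals the continuation value $J_t(\theta)$ and that the recursive survival process equals $1$ on $D_t\cap\{\theta_t=1\}$, equals $P(\cL_t\theta\lhd\sigma\mid\cF_t)$ on $D_t\cap\{\theta_t=0\}$, and equals $0$ on $D_t^c$. Comparing these with \eqref{eq:theta-to-S}, and using that $\theta_t=1$ corresponds to $V_t=G_t$ while $\theta_t=0$ corresponds to $V_t>G_t$, matches the recursive survival process to \eqref{eq:theta-to-S}; likewise the recursion's rule $V_t=\max(G_t,J_t)$ for $t<T_e$ and $V_t=G_t$ for $t\geq T_e$ matches \eqref{eq:theta-to-V} once $J_t$ is identified with $J_t(\theta)$. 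The one point requiring care is the behavior on $D_t^c$: the recursion's normalization $S_t=0$ there (made explicit in Lemma~\ref{le:survivalExpr}) is what makes $S_tV_t$ well defined via $0\cdot\Delta=0$ and is essential for the recursion to reproduce the correct conditional probabilities at earlier times. With this understood, no genuine obstacle remains and the identification is routine bookkeeping.
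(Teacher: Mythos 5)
Your proof is correct and follows essentially the same route as the paper's: it combines the equilibrium--Snell pair correspondence of Theorem~\ref{th:SnellEquilibrium} with the uniqueness of the finite-horizon equilibrium from Theorem~\ref{th:backwardAlgo}, and identifies the recursion's output with \eqref{eq:theta-to-V}--\eqref{eq:theta-to-S} via Lemma~\ref{le:survivalExpr}. If anything, you make explicit the transport of uniqueness across the correspondence (any Snell pair induces the unique equilibrium, hence equals the unique pair attached to it), a step the paper leaves implicit.
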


\begin{proof}
  Let $(V',S')$ and $\theta$ be as in Theorem~\ref{th:backwardAlgo}. By Theorem~\ref{th:SnellEquilibrium}, there exists a unique Snell pair $(V,S)$ with $\theta=1_{\{G\geq V\}}$, and it is completely determined by~\eqref{eq:theta-to-V}--\eqref{eq:theta-to-S}. In view of Lemma~\ref{le:survivalExpr} and the definition in Theorem~\ref{th:backwardAlgo}, $(V',S')$ also satisfies \eqref{eq:theta-to-V}--\eqref{eq:theta-to-S}, thus $(V',S')=(V,S)$.
\end{proof}

We note that in the infinite-horizon case, the examples in Section~\ref{se:infiniteHorizonExamples} show that Snell pairs are not unique in general.
  
\begin{proof}[Proof of Theorem~\ref{th:SnellEquilibrium}.]
  We focus on the case $T=\infty$; the finite-horizon case is similar but simpler.

(a) Let $(V,S)$ be a Snell pair and $\theta=1_{\{G\geq V\}}$; we show that $\theta\in\Theta$ and $\Phi(\theta)=\theta$.
If $t\geq T_{e}$, then (i) implies $V_{t}=G_{t}$ and hence $\theta_t=1$ and $S_t=1_{D_t}$; see~(iii). Let $t< T_{e}$. Note that we are in $D_t$ and $t<\cL_{t}\theta\le  T_{e}\le \sigma$. If $\cL_{t}\theta=\infty$, we have $S_{\cL_{t}\theta}=S_\infty=1_{\{\sigma=\infty\}}=1$, whereas if $\cL_{t}\theta<\infty$, we have $S_{\cL_{t}\theta}=1_{D_{\cL_{t}\theta}}$. In summary,
\begin{equation}\label{eq:ScLtheta}
S_{\cL_{t}\theta}=1_{\{\cL_{t}\theta<\infty\}} 1_{D_{\cL_{t}\theta}}+1_{\{\cL_{t}\theta=\infty\}} =1_{\{\cL_{t}\theta\lhd \sigma \}}.
\end{equation}
As a consequence, recalling Lemma~\ref{le:terminalCond} for the case $\cL_{t}\theta=\infty$,
\begin{equation}\label{eq:SVcLtheta}
S_{\cL_{t}\theta}V_{\cL_{t}\theta}%
=1_{\{\cL_{t}\theta\lhd \sigma\}}G_{\cL_{t}\theta}.
\end{equation}
Next, we consider separately two cases.

\emph{Case $\theta_t=0$:} Using (v), the Optional Sampling Theorem (with the boundedness of $S$ and the uniform integrability from Lemma~\ref{le:terminalCond}) as well as \eqref{eq:ScLtheta} and \eqref{eq:SVcLtheta}, we see that
  \begin{equation}\label{eq:S}
  S_t=E[S_{\cL_{t}\theta}|\cF_{t}]=P(\cL_{t}\theta\lhd \sigma |\cF_{t})
  \end{equation}
  and
  \begin{equation}\label{eq:SV}
  S_tV_t=E[S_{\cL_{t}\theta} V_{\cL_{t}\theta}|\cF_{t}]=E[1_{\{\cL_{t}\theta\lhd\sigma \}}G_{\cL_{t}\theta}|\cF_{t}].
  \end{equation}
  In view of~(i), Equation \eqref{eq:S} yields in particular that $\P(\cL_{t}\theta\lhd\sigma |\cF_{t})=S_t>0$ since we are in~$D_t$, as required for the  admissibility of $\theta$. Moreover, as $\theta_t=0$, \eqref{eq:S} and~\eqref{eq:SV} together imply that
  \[
  G_t<V_t=\frac{S_{t}V_{t}}{S_{t}}=\frac{E[1_{\{\cL_{t}\theta\lhd\sigma\}}G_{\cL_{t}\theta}|\cF_{t}]}{\P(\cL_{t}\theta\lhd\sigma|\cF_{t})}=J_{t}(\theta).
  \]

\emph{Case $\theta_t=1$:} In this case, $S$ is a martingale from time $t+1$ to time $\cL_{t}\theta$ and hence, similarly to the previous case,
  \[
    S_{t+1}=E[S_{\cL_{t}\theta}|\cF_{t+1}]=E[1_{\{\cL_{t}\theta\lhd\sigma\}}|\cF_{t+1}].
  \]
  Taking conditional expectations on both sides, we deduce that
  \[
    S^t_t=E[S_{t+1}|\cF_{t}]= \P(\cL_{t}\theta\lhd\sigma|\cF_{t}).
  \]
  In view of $t<T_{e}$ and (i), we have $\P(S_{t+1}>0|\cF_{t})=\P(D_{t+1}|\cF_{t})>0$ which then implies $\P(\cL_{t}\theta\lhd\sigma|\cF_{t})= E[S_{t+1}|\cF_{t}]>0$ and finishes the proof of admissibility.
  Moreover, by the supermartingale property of $S^tV$, the Optional Sampling Theorem with the uniform integrability from Lemma~\ref{le:terminalCond}, and~\eqref{eq:SVcLtheta}, we have
  \begin{align*}
  S^t_tV_t&\ge E[S^t_{\cL_{t}\theta} V_{\cL_{t}\theta}|\cF_{t}]=E[S_{\cL_{t}\theta} V_{\cL_{t}\theta}|\cF_{t}]\\
  &=E[1_{\{\cL_{t}\theta\lhd\sigma\}}G_{\cL_{t}\theta}|\cF_{t}]=\P(\cL_{t}\theta\lhd\sigma|\cF_{t}) J_{t}(\theta)\\
  &=E[S_{t+1}|\cF_{t}] J_{t}(\theta)=S_t^t J_{t}(\theta).
  \end{align*}
  As $S^t_t>0$ and $\theta_t=0$, we conclude that $G_t=V_t\ge J_{t}(\theta)$.

Putting the two cases together and noting  $\{\theta_t=0\}\subseteq \{t< T_{e}\}\subseteq D_t$, we conclude that \eqref{eq:theta-to-V} and~\eqref{eq:theta-to-S} hold. We also recall that the condition on $S_{\infty}$ was already established in~\eqref{eq:Sinfty}.
Finally, \eqref{eq:theta-to-V} shows that 
\[
\Phi(\theta)_t=1_{\{t<T_{e}\}\cap \{G_t\geq J_{t}(\theta)\}} + 1_{\{t\geq T_{e}\}}=\theta_t
\]
and the proof of (a) is complete.\\[-.8em]

(b) Let $\theta$ be an equilibrium stopping policy with early stopping preference; we show that the pair $(V,S)$ defined by~\eqref{eq:theta-to-V} and~\eqref{eq:theta-to-S} is a Snell pair.

First, we check that~\eqref{eq:theta-to-S} implies $S_{\infty}:=\lim_{t}S_{t}=1_{D_{\infty}}$. Indeed, we have $P(\cL_{t}\theta\lhd\sigma|\cF_{t})\geq P(\sigma=\infty|\cF_{t})=P(D_{\infty}|\cF_{t})\to 1_{D_{\infty}}$. Thus, \eqref{eq:theta-to-S} implies $\lim_{t} S_{t}=1_{D_{\infty}}$ as desired.

We readily see that~(i') holds, so it suffices to show (ii'), (iii') and (iv).
Note that
\begin{equation}\label{eq:eq:thetaZeroIsIneq}
  \{\theta_t=0\}=\{\Phi(\theta)_t=0\}=\{t<T_{e}\}\cap\{J_{t}(\theta)>G_{t}\}=\{V_t>G_t\}.
\end{equation}
Let $t<T_{e}$ (which implies that we are in $D_t$), then
\begin{align*}
S^t_t&=E[S_{t+1}|\cF_{t}]\\
&=E[1_{D_{t+1}}\left(1_{\{V_{t+1}=G_{t+1}\}}+1_{\{V_{t+1}>G_{t+1}\}}\P(\cL_{t+1}\theta\lhd \sigma |\cF_{t+1})\right)|\cF_{t}]\\
&=E[1_{\{V_{t+1}=G_{t+1}\}\cap D_{t+1}}+1_{\{V_{t+1}>G_{t+1}\}\cap \{\cL_{t+1}\theta\lhd \sigma \}}|\cF_{t}]\\
&=E[1_{\{\theta_{t+1}=1\}\cap \{t+1\lhd \sigma \}}+1_{\{\theta_{t+1}=0\}\cap \{\cL_{t+1}\theta\lhd \sigma \}}|\cF_{t}]\\
&=E[1_{\{\theta_{t+1}=1\}\cap \{\cL_{t}\theta\lhd \sigma \}}+1_{\{\theta_{t+1}=0\}\cap \{\cL_{t}\theta\lhd \sigma \}}|\cF_{t}]\\
&= P(\cL_{t}\theta\lhd \sigma  |\cF_{t})\le S_t
\end{align*}
and
\begin{align*}
&E[S_{t+1}V_{t+1}|\cF_{t}]\\
&=E[1_{D_{t+1}}(1_{\{V_{t+1}=G_{t+1}\}}G_{t+1}+1_{\{V_{t+1}>G_{t+1}\}} \P(\cL_{t+1}\theta\lhd \sigma |\cF_{t+1})V_{t+1}|\cF_{t}]\\
&=E[1_{\{\theta_{t+1}=1\}\cap D_{t+1}}G_{t+1}+1_{\{\theta_{t+1}=0\}} \P(\cL_{t+1}\theta\lhd \sigma |\cF_{t+1})J_{t+1}(\theta)|\cF_{t}]\\
&=E[1_{\{\theta_{t+1}=1\}\cap\{t+1 \lhd \sigma \}}G_{t+1}+1_{\{\theta_{t+1}=0\}}E[G_{\cL_{t+1}\theta}1_{\{\cL_{t+1}\theta\lhd \sigma \}}|\cF_{t+1}]|\cF_{t}]\\
& =E[G_{\cL_{t}\theta} 1_{\{\cL_{t}\theta \lhd \sigma \}}|\cF_{t}]=\P(\cL_{t}\theta\lhd \sigma |\cF_{t})J_{t}(\theta)\le S^t_tV_t\le S_tV_t.
\end{align*}
This shows that $(S)_{\cdot\wedge T_{e}}$, $(SV)_{\cdot\wedge T_{e}}$ and $(S^tV)_{\cdot\wedge T_{e}}$ are supermartingales on $\T$. In particular, (iv) holds. In fact, $S$ is a supermartingale up to $T$: for any finite $t\ge T_{e}$, we have $V_t=G_t$ and $V_{t+1}=G_{t+1}$ and consequently
\[
  E[S_{t+1}|\cF_{t}]=E[1_{D_{t+1}}|\cF_{t}]\le 1_{D_t}=S_t.
\] 
As $S$ is bounded and $S_{\infty}=\lim S_{t}$, the supermartingale property up to $T$ follows.

Next, let $Y$ be the Snell envelope of $1_{\{t<\infty\}\cap\{V_t=G_t\}\cap D_t}+1_{\{t=\sigma=\infty\}}$. On the one hand, $S \ge Y$ since $Y$ is the smallest supermartingale dominating $1_{\{t<\infty\}\cap\{V_t=G_t\}\cap D_t}+1_{\{t=\sigma=\infty\}}$. On the other hand, let $t\in\T$ and define $\hat \tau:=t 1_{\{V_t=G_t\}}+\cL_{t}\theta 1_{\{V_t>G_t\}}$ as the stopping time induced by $\theta$ at $t$, then the stopping representation of the Snell envelope yields
\begin{align*}
Y_t&=\esssup_{\tau\ge t} E[1_{\{\tau<\infty\}\cap\{V_{\tau}=G_\tau\}\cap D_{\tau}}+1_{\{\tau=\sigma=\infty\}}|\cF_{t}] \\
&\ge E[1_{\{\hat\tau<\infty\}\cap\{V_{\hat \tau}=G_{\hat \tau}\}\cap D_{\hat \tau}}+1_{\{\hat \tau=\sigma=\infty\}}|\cF_{t}]\\
&=1_{\{V_t=G_t\}\cap D_t}+1_{\{V_t>G_t\}} E[1_{\{\cL_{t}\theta<\infty\}\cap D_{\cL_{t}\theta}}+1_{\{\cL_{t}\theta=\sigma=\infty\}}|\cF_{t}]\\
&=1_{\{V_t=G_t\}\cap D_t}+1_{\{V_t>G_t\}}   E[1_{\{\cL_{t}\theta<\sigma\}}+1_{\{\cL_{t}\theta=\sigma=\infty\}}|\cF_{t}]\\
&=1_{\{V_t=G_t\}\cap D_t}+1_{\{V_t>G_t\}} E[1_{\{\cL_{t}\theta\lhd \sigma\}}|\cF_{t}]
= S_t.
\end{align*}
Thus, we have shown $S=Y$ and (iii') is proved.

Similarly, let $Z$ be the Snell envelope of $(SG)_{\cdot\wedge T_{e}}$. We have $(SV)_{\cdot\wedge T_{e}}\ge Z$ since $Z$ is the smallest supermartingale dominating $(SG)_{\cdot\wedge T_{e}}$. Let $t\in\T$. On the set $\{V_t=G_t\}$, we trivially have $Z_t\ge (SG)_{t\wedge T_{e}}= (SV)_{t\wedge T_{e}}$ by the definition of $V$. 
Whereas on the set $\{V_t>G_t\}\subseteq\{t< T_{e}\}$, 
 \begin{align*}
Z_t&=\esssup_{\tau \geq t} E[(SG)_{\tau\wedge T_{e}}|\cF_{t}]
\ge \limsup_{N\rightarrow \infty} E[(SG)_{\cL_{t}\theta\wedge N }|\cF_{t}]\\
&=E[(SG)_{\cL_{t}\theta}|\cF_{t}]
=E[G_{\cL_{t}\theta} 1_{\{\cL_{t}\theta \lhd \sigma\}}|\cF_{t}]\\
&=S_t V_t=(SV)_{t\wedge T_{e}},
\end{align*}
where we have used the Dominated Convergence Theorem, \eqref{eq:theta-to-S}, \eqref{eq:eq:thetaZeroIsIneq} and the definitions of $V_t$, $S_t$ and $J_{t}(\theta)$. 
We conclude that $(SV)_{\cdot\wedge T_{e}}= Z$; that is, (ii') holds.

It remains to observe the uniqueness. Indeed, if $(V',S')$ is another Snell pair such that $\theta=1_{\{G\geq V'\}}$, then $(V',S')$ satisfies~\eqref{eq:theta-to-V} and~\eqref{eq:theta-to-S} by~(a). But~\eqref{eq:theta-to-V} and~\eqref{eq:theta-to-S} uniquely define the two processes, so we must have $(V',S')=(V,S)$.
\end{proof}

\newcommand{\dummy}[1]{}

\end{document}